\newcommand{\norm}[1]{\left\lVert#1\right\rVert}
\newcommand{\N}{\mathbb{N}}
\newcommand{\R}{\mathbb{R}}
\newcommand{\C}{\mathbb{C}}
\renewcommand{\Re}{\mathop{\rm Re}\nolimits}
\newcommand{\Spec}{\mathop{\rm Spec}\nolimits}
\newcommand{\bmi}{{\bm i}}
\newcommand{\W}{{\mathcal{W}}}
\newcommand{\Wt}{{\tilde{W}}}
\newcommand{\enlletres}{%
   \renewcommand{\theenumi}{\alph{enumi}}%
   \renewcommand{\labelenumi}{(\theenumi)}%
}
\newtheorem{lemma}{Lemma}
\title{Semi-analytical computation of heteroclinic connections between center
manifolds with the parameterization method}
\author[1]{Miquel Barcelona}
\author[2]{Alex Haro}
\author[1]{Josep-Maria Mondelo}
\affil[1]{Departament de Matemàtiques \& CERES-IEEC \& BGSMath, Universitat Autònoma de Barcelona. Av. de l’Eix Central, Ediﬁci C, Bellaterra (Barcelona) 08193, Spain}
\affil[2]{Departament de Matemàtiques i Informàtica \& Centre de Recerca Matemàtica \& BGSMath, Universitat de Barcelona, Gran Via 585, Barcelona 08007, Spain}
\date{}
\begin{document}

\maketitle

\begin{abstract} 
   This paper presents methodology for the computation of whole sets of
   heteroclinic connections between iso-energetic slices of center manifolds of
   center\, \linebreak[1]$\times$\, \linebreak[1]center\,
   \linebreak[1]$\times$\, \linebreak[1]saddle fixed points of autonomous
   Hamiltonian systems. It involves: (a) computing Taylor
   expansions of the
   center-unstable and center-stable manifolds of the departing and arriving
   fixed points through the parameterization method, using a new style that
   uncouples the center part from the hyperbolic one, thus making the fibered
   structure of the manifolds explicit; (b)
   uniformly meshing iso-energetic
   slices of the center manifolds, using a novel strategy that avoids numerical
   integration of the reduced differential equations and makes an explicit 3D
   representation of these slices as deformed solid ellipsoids;
   (c) matching the
   center-stable and center-unstable manifolds of the departing and arriving
   points in a Poincaré section. The methodology is applied to obtain the whole
   set of iso-energetic heteroclinic connections from the center manifold of
   $L_2$ to the center
   manifold of $L_1$ in the Earth-Moon circular, spatial Restricted Three-Body
   Problem, for nine increasing energy levels that reach the
   appearance of Halo
   orbits in both $L_1$ and $L_2$.  Some comments are made on possible
   applications to space mission design.

   {\bf Keywords.} Parameterization method; Heteroclinic connections; Invariant
   tori; Libration point orbits; RTBP; Center manifolds.
\end{abstract}

\section{Introduction}

Heteroclinic connections play an important role in the description of
dynamical systems from a global point of view. In this work we address the
systematic computation of heteroclinic connections between center manifolds of
fixed points of center$\times$center$\times$saddle type of autonomous,
3-degrees-of-freedom Hamiltonian systems.

Our interest in this case comes from applications in Astrodynamics,
namely to Libration Point (LP) space missions. In these missions,
spacecraft are sent to orbits that stay close to the
fixed (in a
rotating frame) points $L_1$, $L_2$ of the spatial, circular
restricted three-body problem (RTBP). This model describes the
dynamics of a massless particle under the attraction of two celestial
bodies, called primaries, that revolve in circles around their common
center of mass. Examples of primaries are the Sun and a planet, or a
planet and a moon. Among the advantages of these missions are the
absence of the shadow of a celestial body, thus providing a more stable
thermal environment, and continuous access to the whole celestial
sphere, except for a direction, that is not fixed but rotates with the
primaries. An actual example of the use of these advantages is found in
the orbit of the James Webb Space Telescope, around Sun-Earth
$L_2$, when compared to the orbit of the Hubble Space Telescope, around
Earth. See \cite{2003DuFa} for a survey of early LP missions, and the
webs of the space agencies for many newer ones.

From the point of view of the dynamics of the RTBP \cite{esa1, esa2,
esa3, esa4}, orbits in the center manifold of the $L_1,L_2$ points
 provide nominal
orbits for LP missions, whereas their invariant stable and unstable
manifolds provide transfer orbits either from Earth to a nominal orbit
or between nominal orbits. Focusing in this last case, missions that
have used trajectories close to heteroclinic connections are Genesis
\cite{1997HoBaWiLo} and Artemis\footnote{Not to be confused with the
human spaceflight Artemis program to return to the Moon.}
\cite{2011aSweetBroetal}. The main motivation of this paper is to
contribute towards the systematic design of similar missions.

For the Sun--Jupiter planar RTBP, a heteroclinic connection
between an object of the center
manifold of $L_1$ and an object of the center manifold of $L_2$, namely a
planar Lyapunov
periodic orbit (p.o.) in  each case, is
used in the celebrated paper \cite{2000KoLoMaRo} in order to explain the
(apparently) erratic behavior of
the comet Oterma. This heteroclinic connection is part of one of the
many families of  connections between Lyapunov
p.o.~explored in \cite{2006CaMa}. Some connections between Lyapunov
p.o.~have also been proven to exist through rigorous numerics
\cite{2003WiZg,2005WiZg}. The first work that addresses the
systematic computation of heteroclinic connections between center
manifolds of libration points, thus also obtaining connections between
invariant tori, is \cite{2000GoMa}. The
computations of this reference are used in
\cite{2004Goetal} in order to extend the results of
\cite{2000KoLoMaRo} to the spatial RTBP.
Reference \cite{2007aArMa} is
the first and only work (as far as we know) that computes the whole
set of heteroclinic connections between the center manifolds of $L_1$ and $L_2$ for
a fixed energy level. It does so not for the spatial RTBP
but for the spatial
Hill's problem. The methodology used is a combination of the
Lindstedt-Poincaré expansions introduced in \cite{2005Ma} with the
modification of the reduction to the center manifold technique
\cite{esa3,1999JoMa} used in \cite{2000GoMa}. The set of connections
found in \cite{2007aArMa} is shown in \cite{2008aDeMaRo}
to define a scattering map. 

In this paper we introduce a new technique for the computation of whole
sets of heteroclinic connections between iso-energetic slices of the center
manifolds of $L_1$ and $L_2$, that
is applicable to center$\times$center$\times$saddle fixed points of any
3-degrees-of-freedom, autonomous Hamiltonian system.  Compared to the one of
\cite{2007aArMa}, ours is a procedure that provides less insight on the fine
structure of the set of connections, but on the other hand it is more automatic
and
simpler to carry
out, and thus more adequate for the systematic exploration of phase space.
Our procedure relies solely on the semi-analytical computation of
center-unstable and center-stable manifolds. By avoiding the use of
Lindstedt-Poincaré expansions, it is also able to reach larger levels of
energy. 

The center-unstable and center-stable manifolds of $L_1$, $L_2$ are computed
through the parameterization method. The parameterization method, first
introduced in \cite{2003aCaFoLla,2003bCaFoLla,2005aCaFoLla}, has proven to be a
valuable tool in both the theoretical proof of existence of invariant manifolds
and their computation, due to the fact that the proofs are constructive and can
be turned into algorithms. Our approach is purely computational, and our
starting point is chapter 2 of \cite{mamotreto}. To the parameterization styles
described there, we add a mixed-uncoupling style that,
besides adapting the parameterization to the dynamics by making certain
sub-manifolds invariant, it is able to decouple the hyperbolic part from the
central one, thus making the fibered structure of the manifolds explicit. The
implementation of this new style of parameterization has been done starting from
the software from chapter 2 of \cite{mamotreto}, which is available at
\url{http://www.maia.ub.es/dsg/param/}.

A key point for our procedure to be systematic and to require little human
intervention is being able to produce equally-spaced meshes of iso-energetic
slices of the center manifold in a computationally efficient manner.
Classically, the center manifold of the collinear points of the spatial,
circular RTBP has been described globally through iso-energetic Poincaré
sections, that are two-dimensional and thus easily visualized
\cite{esa3,1999JoMa,2005GoMaMo}. A first strategy would be to globalize these
Poincaré sections to the iso-energetic slice through numerical integration of
the reduced equations. This turns out to be very expensive because the reduced
equations are given by high-order expansions of 4-variate functions. The
strategy we propose here avoids numerical integration completely, and also makes
explicit the representation of a 3D projection of the iso-energetic slice as a
deformed solid ellipsoid, that is what we actually mesh. On the topological
structure of iso-energetic slices of the center manifold, see
e.g.~\cite{1968Co,tesimcgehee,meyer-hall-offin,2004Goetal}.

The proposed methodology is applied in order to obtain the whole set of
heteroclinic connections from the center manifold of $L_2$ to the center
manifold of $L_1$\footnote{From these, the heteroclinic connections from the
   center manifold of
$L_1$ to the center manifold of $L_2$ can be obtained through one of the
symmetries of the RTBP (see eq.~\eqref{eq:symy}).} of the Earth-Moon spatial,
circular RTBP for 3 groups of 3 energy levels, each group of connections
performing a different number of revolutions around the Moon.  For the last
group of energies, the Halo family of p.o.~has already appeared, meaning that
these energies are relatively far from the equilibrium point.  We have chosen
Earth and Moon as primaries because of its intrinsic interest in space flight,
and also in order to see that the methodology works in a case in which, at
fixed energies, the dynamics around $L_1$ and $L_2$ are highly asymmetric one
with respect to the other.  

The paper is structured as follows: Section~\ref{sec:rtbp} introduces the
spatial, circular RTBP and provides several data related to the linear
dynamics around $L_1$, $L_2$. Section~\ref{sec:param} summarizes the
parameterization method for invariant manifolds of fixed points of flows, and
introduces our mixed-uncoupling style of parameterization for the computation
of center-stable and center-unstable manifolds. Section~\ref{sec:mesh}
introduces our meshing strategy for iso-energetic slices of the center
manifold. For the restrictions of
the center-unstable manifold of $L_2$ and the center-stable one of $L_1$
to the corresponding center manifolds, several meshes are computed and used
to test the validity of the expansions and decide the order to be used and the
energy range to be explored. Section~\ref{sec:conexs} describes the strategy
we follow to obtain the whole set of heteoclinic connections of an energy
level from meshes of the center
manifolds of $L_2$ and $L_1$. Finally,
in Section~\ref{sec:numres}, the whole set of heteroclinic connections is
computed for the 9 energy levels mentioned
previously.

\section{The spatial, circular RTBP}
\label{sec:rtbp}

In this section we recall some facts about the spatial, circular
Restricted Three-Body Problem (RTBP) that will be used in the following
sections. For more details, see e.g.~\cite{szebehely}.

The spatial, circular RTBP describes the motion of a particle of infinitesimal
mass which is considered to be under the gravitational influence of two other
massive bodies, known as primaries, that move in circular orbits around their
center of masses. Let us denote by $m_1$ and $m_2$ the masses of the primaries,
chosen in order to have $m_1 > m_2$.  By taking as origin the center of mass and
considering a uniformly rotating coordinate system with the same period as the
primaries, called synodic, the primaries can be made to remain fixed in the
horizontal axis. By further taking dimensionless distance and time units as to
have the distance between the primaries equal to $1$ and their period of
rotation equal to $2\pi$, the differential equations depend on a single
parameter $\mu=m_2/(m_1+m_2)\in[0,1/2)$. By finally introducing momenta as $p_x
= \dot{x} - y$, $p_y = \dot{y} + x$ and $p_z = \dot{z}$, the behavior of the
particle of infinitesimal mass is described by the following set of differential
equations, 
\begin{equation} 
   \begin{aligned} 
      \dot{x} &= p_x + y, \quad &\dot{p}_x &= p_y -
      \frac{1 - \mu}{r_1^3}(x-\mu) - \frac{\mu}{r_2^3}(x-\mu+1),\\ 
      \dot{y} &= p_y - x, \quad &\dot{p}_y &= -p_x - \frac{1-\mu}{r_1^3}y -
      \frac{\mu}{r_2^3}y,\\
      \dot{z} &= p_z,     \quad &\dot{p}_z &=
      -\frac{1-\mu}{r_1^3}z-\frac{\mu}{r_2^3}z, 
   \end{aligned}
   \label{eq:rtbp} 
\end{equation} 
with $r_1$ and $r_2$ defined as
$$
r_1 = \sqrt{(x-\mu)^2 +y^2+z^2}, \qquad r_2 = \sqrt{(x-\mu+1)^2+y^2+z^2}.
$$
This set of differential equations is a
Hamiltonian system with associated
Hamiltonian 
\begin{equation} \label{eq:hamrtbp}
   H(x,y,z,p_x,p_y,p_z) = \frac{1}{2}(p_x^2+p_y^2+p_z^2) -xp_y +
yp_x - \frac{1-\mu}{r_1} - \frac{\mu}{r_2}.  
\end{equation}
Due to symmetries in the differential equations \eqref{eq:rtbp}, if
$\bigl(x(t),y(t),z(t),p_x(t),p_y(t),p_z(t)\bigr)$ is a solution, then
\begin{equation}
   \bigl(x(-t),-y(-t),z(-t),-p_x(-t),p_y(-t),-p_z(-t)\bigr)\label{eq:symy}
\end{equation}
and
\[
   \bigl(x(t),y(t),-z(t),p_x(t),p_y(t),-p_z(t)\bigr)
\]
are also solutions.

It is well known that the RTBP has five equilibrium points. Two of them, called
Lagrangian and denoted by
$L_4,L_5$, form equilateral triangles with the primaries. The remaining three,
called collinear and denoted by $L_1,L_2,L_3$, lie on the line joining the
primaries, this is, the $x$ axis, see Figure~\ref{fig:rtbp}.
Their distances to the closest primary are given by
the only positive root of the corresponding Euler's quintic equation 
\[
   \begin{aligned} 
      \gamma_j^5 \mp (3-\mu)\gamma_j^4 + (3-2\mu)\gamma_j^3 - \mu \gamma_j ^2
      \pm 2\mu \gamma_j - \mu &= 0,  \quad &j&=1,2,\\
      \gamma_j^5 + (2+\mu)\gamma_j^4+(1+2\mu)\gamma_j^3 - (1-\mu)\gamma_j^2 -
      2(1-\mu)\gamma_j - (1-\mu) &= 0, \quad &j&= 3,  
   \end{aligned}
\]
so that, if we denote the $x$ coordinate of
the $L_j$ point as $x_{L_j}$, we have $x_{L_1}=\mu-1+\gamma_1$,
$x_{L_2}=\mu-1-\gamma_2$, $x_{L_3}=\mu+\gamma_3$. 

An important property of the collinear equilibrium points
is that their
associated set of eigenvalues has the following form
\[
   \Spec
   DF(L_{j}) = \{\bmi\omega_1^{(j)}, -\bmi\omega_1^{(j)},
   \bmi\omega_2^{(j)}, -\bmi\omega_2^{(j)},
   \lambda^{(j)}, -\lambda^{(j)}\}, \quad
   j=1,2,3, 
\]
where $\bmi$ denotes the imaginary unit, so their linear behaviour is
center$\times$center$\times$saddle. For this set of eigenvalues, the
corresponding eigenvectors will be denoted as
\[
   \begin{matrix}
      u^{(j)}, & \bar u^{(j)}, & v^{(j)},
      & \bar v^{(j)}, & w_+^{(j)}, & w_-^{(j)}, \\
   \end{matrix}
\]
where the overline denotes complex conjugation and
\begin{equation}\label{eq:eigvctrsri}
   u^{(j)} = u_r^{(j)}-\bmi u_i^{(j)}, \quad  v^{(j)}
   = v_r^{(j)}-\bmi v_i^{(j)}.
\end{equation}

\begin{figure}[htbp]
   \centering
   \includegraphics[width=.5\textwidth]{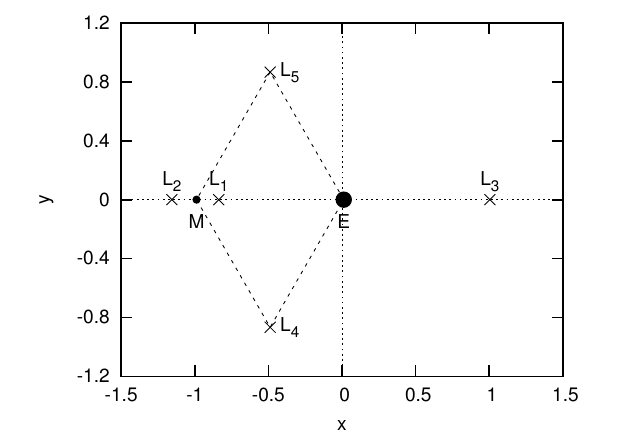}
   \caption{For the Earth-Moon mass
   parameter, scheme of the RTBP equilibrium points.}
   \label{fig:rtbp}
\end{figure}

From now on, $\mu$ will be the Earth-Moon mass ratio, taken as
\[
   \mu = 1.215058560962404 \cdot 10^{-2},
\]
as obtained from the DE406 JPL ephemeris file \cite{1998aSta}.

The eigenvalues in each one of
the two equilibrium points under study are numerically found to be
\begin{equation*} 
   \begin{aligned} 
      \omega_1^{(1)} &= 2.334385885086,\quad &\omega_2^{(1)} &=
      2.268831094972,\quad &\lambda^{(1)} &= 2.93205593364,\\ 
      \omega_1^{(2)} &= 1.862645862176513, \quad &\omega_2^{(2)} &=
      1.78617614289,\quad &\lambda^{(2)} &= 2.1586743203.
   \end{aligned} 
\end{equation*}
Note that, since neither $\omega_1^{(j)}$ is an integer multiple of $\omega_2^{(j)}$ nor 
$\omega_2^{(j)}$ is an integer multiple of $\omega_1^{(j)}$,
Lyapunov's center
theorem \cite{siegel-moser,meyer-hall-offin} applies to each pair of complex
eigenvalues, so two families of p.o.~are born from each collinear equilibrium
point. Each
one of these families spans a
2-dimensional manifold which is
tangent to the real and imaginary part of the eigenvectors associated to each
$\pm\bmi\omega_i$ eigenvalue at the equilibrium point. The planar
(resp.~vertical) family is tangent to eigenvectors with zero $z,p_z$
(resp.~$x,y,p_x,p_y$) components. From now on,
the
planar (resp.~vertical) eigenvalues will be denoted as $\pm\bmi\omega_p$
(resp.~$\pm\bmi\omega_v$),
in order to
emphasize their direction. According to the numerical values given above,
$\omega_p^{(j)}=\omega_1^{(j)}$ and
$\omega_v^{(j)}=\omega_2^{(j)}$.\footnote{Note that
Lyapunov's center theorem always applies in the planar RTBP. Therefore, the only
Lyapunov family whose existence can be compromised is the vertical one, in the
case that $\omega_p^{(j)}$ is an integer multiple of $\omega_v^{(j)}$. This does
not happen in the Earth-Moon case.}

\section{Parameterizing center-stable and center-unstable manifolds}
\label{sec:param}

For the purpose of computing all the possible heteroclinic connections from
the center manifold of $L_2$ to the one of $L_1$, to be denoted as
$\W^c(L_2)$, $\W^c(L_1)$ from now on, parameterizations of the
center-unstable manifold of $L_2$ and center-stable manifold of $L_1$ are
needed. The center-unstable manifold of $L_2$ (resp.~center-stable manifold of
$L_1$) will be denoted as $\W^{cu}(L_2)$ (resp.~$\W^{cs}(L_1)$) from now on.
Parameterizations of these manifolds are found by means of the
parameterization method of invariant manifolds of fixed points of flows
(PMFPF) as presented in chapter 2 of \cite{mamotreto}.  To the development of
\cite{mamotreto}, we add a mixed-uncoupling style, that effectively uncouples
the central and hyperbolic part in such a way that, for any point in a
center-stable or center-unstable manifold, the point of the center manifold at
the base of the corresponding fiber is obtained by setting to zero the last
parameter. In this section, we first recall the PMFPF briefly. After that, we
introduce our mixed-uncoupled style and discuss its application to the
computation of $\W^{cu}(L_2)$ and $\W^{cs}(L_1)$. We end the section with some
comments on how this mixed-uncoupled style makes the fibered structure of the
manifolds explicit.

Consider a $n$-dimensional dynamical system
\begin{equation}\label{eq:sysorigpm}
   \dot{Z} = F(Z),
\end{equation}
whose flow is $\Phi_t$ and
has a fixed
point $Z_0$, this is $F(Z_0)=0$.  We assume for simplicity that $DF(Z_0)$ has
$n$ distinct eigenvalues $\lambda_1,\dots,\lambda_n$, with corresponding
eigenvectors $v_1,\dots,v_n$, this is
\[
   DF(Z_0)v_j=\lambda_jv_j, \quad j=1, \dots ,n,
\]
with $\lambda_j\in\C$, $v_j \in \C^n$.  Our purpose consists in computing a
$d$-dimensional manifold containing $Z_0$ and tangent to the linear space
spanned by some of its eigenvectors.

For that, we consider the change of variables given by $Z =
Z_0 + Pz$, so that the new system of ODE $\dot{z} =
G(z)$ has the origin as a fixed point, and the differential of the vector
field evaluated at the new fixed point is diagonal with $DG(0) =
\text{diag} \{\lambda_1, \dots , \lambda_n \}$.  Note that $P$ is a matrix
composed by the eigenvectors in such a way that the first $d$ components
correspond to those eigendirections that we want our parameterization to be
tangent to.

Then, our problem turns into computing an expansion of a
$d$-dimensional manifold containing the origin, invariant by the flow described
by $G$ and tangent to the first $d$ components of $z$ at $z=0$. Then,
let us denote by $W : \C^d \to \C^n$ the parameterization
of the manifold and by $f : \C^d \to \C^d$
the reduced vector field\footnote{We will be looking for real manifolds.
Formally, we need to consider complex $W,f$, because $DG(0)$ may have complex
entries. The fact that $DG(0)$ is diagonal greatly simplifies the developments
that will follow. There is no computational overhead in the evaluation of the
complex $W$, $f$ thus obtained if the symmetries they have are exploited. See
\cite{mamotreto} for additional details.}, so that, if the manifold is
parameterized as $W(s)$, the differential equations reduced to the manifold are
$\dot s=f(s)$.  For this to be true, the following invariance equation must be
satisfied,
\[
   \psi_t\bigl(W(s)\bigr)=W\bigl(\phi_t(s)\bigr),
\]
where $\psi_t$ and $\phi_t$ are the flows of $\dot z=G(z)$ and $\dot s=f(s)$,
respectively. By differentiating with respect to $t$ and taking $t=0$, the
infinitesimal version of the invariance equation is obtained,
\begin{equation}\label{eq:invinf}
   G(W(s)) = DW(s)f(s),
\end{equation}
which is the equation that we actually solve for $W,f$.

The way of solving this invariance equation \eqref{eq:invinf} consists on
computing power series expansions for
both $W$ and $f$. Denote $W(s)=\bigl(W^1(s),\dots,W^n(s)\bigr)$,
$f(s)=\bigl(f^1(s),\dots,f^d(s)\bigr)$
and
\[
   W^i(s) = \sum_{k\geq0} W^i_k(s) ,
   \quad
   f^i(s) = \sum_{k\geq 0} f^i_k(s) ,
\]
where $W^i_k(s)=\sum_{|m|=k}W_m^i s^m$, $f_k^i(s)=\sum_{|m|=k}f_m^i s^m$,
$m=(m_1,\dots,m_d)\in\N^d$, $|m|=m_1+\dots+m_d$, $s=(s_1,\dots,s_d)$ and
$s^m=s_1^{m_1}\dots s_d^{m_d}$. Using these
expansions, equation \eqref{eq:invinf} is solved order by order.
Orders 0 and 1 are obtained as
\begin{equation} \label{eq:firstorders}
\begin{aligned}
   f_0(s) &= (0, \dots, 0), \quad &W_0(s) &= (0, \dots , 0), \\
   f_1(s) &= (\lambda_1 s_1, \dots, \lambda_d s_d ), \quad &W_1(s) &= (s_1, \dots, s_d, 0,
   \dots, 0).
\end{aligned}
\end{equation}
Imposing \eqref{eq:invinf} at order $k$ leads to
the \emph{$k$-th order cohomological equation},
whose right-hand side is given by all the known terms up to
order $k-1$,
\[
   R(s) = [G(W_{<k}(s))]_k - \sum_{l=2}^{k-1}DW_{k-l+1}(s)f_l(s), 
\]
where $W_{<k}(s)$ is the power series expansion of the manifold up to order
$k-1$ and $[\cdot]_k$ denotes ``terms of order $k$''.
In terms of this $R$, the cohomological equation becomes, at the
coefficient level,
\begin{equation}\label{eq:lolo}
   \begin{aligned}
      (\langle \lambda, m \rangle - \lambda_i) W_{m}^i + f_m^i &= R_m^i, \quad
      i \in \{1, \dots, d\} \\
      (\langle {\lambda}, m \rangle - \lambda_i) W_{m}^i &= R_m^i, \quad 
      i \in \{d+1, \dots, n\}.
   \end{aligned}
\end{equation}
Here we assume $R=(R^1,\dots,R^n)$, $R^i(s)=\sum_{|m|=k}R^i_m s^m$, and
$\lambda=(\lambda_1,\dots,\lambda_d)$, so that $\langle {\lambda}, m
\rangle = \lambda_1 m_1 + \dots + \lambda_d m_d$.

The first line of \eqref{eq:lolo} is known as the tangent part, while the
second line is called the normal part. For \eqref{eq:lolo} to have solution, it
is necessary that its normal part has no \emph{cross resonances}, this is,
$\langle {\lambda}, m \rangle\neq\lambda_i$ for all $m\in\N^d$ and
$i\in\{d+1,\dots,n\}$. In order to solve the tangent part, we can either
(a) take
$W^i_m=0$ and $f^i_m=R^i_m$ or (b) take $f^i_m=0$ and
$W^i_m=R^i_m/(\langle
{\lambda}, m \rangle-\lambda_i)$. For this last choice to be possible,
$(i,m)$ cannot be an \emph{inner resonance}, this is, $\langle
{\lambda}, m \rangle\neq \lambda_i$.
A \emph{style of parameterization} is a rule that determines the choice
between (a) and (b) as
a function of $(i,m)$. Some styles of parameterization are discussed in
\cite{mamotreto}.

For our purposes, it will be convenient to introduce a mixed-uncoupling style,
in which we require that several sub-manifolds of the form
$\{s_{i_1}=s_{i_2}=\dots=s_{i_J}=0\}$ are invariant, and we also require that the first $d-1$
equations of the reduced system of equations are uncoupled from the last one.
For a general $(i,m)$, in order to keep the parameterization simple, we will
want to choose (a) above unless it is incompatible with the invariance and
uncoupling requirements.
For the invariance requirement we follow \cite{mamotreto}: if
$I_1,\dots,I_N\subset\{1,\dots,d\}$ are such that $\{s_i=0,i\in I_j\}$,
$j=1,\dots,N$ are required to be invariant, we
choose (b)
if $\exists j: i\in
I_j$ and $m_l=0$ $\forall l\in I_j$.
For the uncoupling requirement, we need to eliminate the dependence of $f^1,\dots,f^{d-1}$ on $s_d$.
In order to achieve this, we
choose (b)
if $1\leq i \leq d-1$ and $m_d\neq 0$.
For both the sub-manifold invariance and uncoupling
requirements, it is necessary that $(i,m)$ is not an
internal resonance when choosing (b).

In our case, we want to compute $\W^{cu}(L_2)$ and $\W^{cs}(L_1)$ for the
Earth-Moon circular, spatial RTBP (see
Section~\ref{sec:rtbp}).
The choices for the ordering of eigenvalues are
\begin{equation}\label{eq:ordegvwcu}
   \lambda_{1,2}=\pm \bmi\omega_p^{(2)},\ 
   \lambda_{3,4}=\pm \bmi\omega_v^{(2)},\ \lambda_5>0
\end{equation}
for $\W^{cu}(L_2)$, and
\begin{equation}\label{eq:ordegvwcs}
   \lambda_{1,2}=\pm \bmi\omega_p^{(1)},\ 
   \lambda_{3,4}=\pm \bmi\omega_v^{(1)},\ \lambda_5<0
\end{equation}
for $\W^{cs}(L_1)$. We use the mixed-uncoupling style
just described with $d=5$, $N=3$, $I_1=\{5\}$, $I_2=\{3,4\}$, $I_3=\{1,2\}$.
This is, in addition to uncoupling the first $4$ equations from the $5$-th
one, we want the submanifolds $\{s_5=0\}$, $\{s_3=s_4=0\}$ and $\{s_1=s_2=0\}$
to be invariant. There are no internal resonances in
all the choices involved, namely:
\newcommand{\lolangle}{\langle\lambda,m\rangle}
\begin{enumerate}\enlletres
\item\label{en:nointres_a}
   For the uncoupling, we need $\lolangle - \lambda_i \neq0$ if $m_5\neq0$ and
   $i=1,\dots,4$. This is true
      because $\Re\lolangle\neq0$, since $\Re(\lambda_dm_d)\neq0$ and
   $\Re(\lambda_1m_1+\dots+\lambda_4m_4)=0$.
\item\label{en:nointres_b}
   For $\{s_5=0\}$ to be invariant, we need that $\lambda_5\neq\lolangle$ if
   $m_5=0$. This is necessarily true since
   $\Re(\lambda_1m_1+\dots+\lambda_4m_4)= 0$.
\item
   For $\{s_1=s_2=0\}$ to be invariant, we need that
   $\lambda_{1,2}\neq\lolangle$ if $m_1=m_2=0$. If $m_5\neq0$, it is true by the
   argument in (\ref{en:nointres_a}). If $m_5=m_1=m_2=0$, to have
   $\lambda_1=\lolangle$ or $\lambda_2=\lolangle$ would imply that
   $\omega_p^{(j)}$ is an integer multiple of $\omega_v^{(j)}$ which it is not
   true, by the hypothesis of Lyapunov's center theorem for the existence of the
   vertical Lyapunov family (see~Section~\ref{sec:rtbp}).
\item 
   For $\{s_3=s_4=0\}$, an analogous argument can be made\footnote{The
   invariance of $\{s_3=s_4=0\}$ does not actually need to be imposed: the
   terms $R^i_m$ are found to be zero when $m_3=m_4=0$ and $i=3,4$. This is a
   consequence of the fact that the planar RTBP is a subproblem of the spatial
   one.}.
\end{enumerate}

Following the notations of Section~\ref{sec:rtbp}, the RTBP equations
\eqref{eq:rtbp} are first diagonalized by choosing
\begin{equation}
\label{eq:pmatrix}
\begin{aligned}
   P &= \begin{bmatrix}
           u^{(2)} & \bar u^{(2)} & v^{(2)} & \bar v^{(2)}
               & w_+^{(2)} & w_-^{(2)}
        \end{bmatrix}, \quad\mbox{for $\W^{cu}(L_2)$},
        \\
   P &= \begin{bmatrix}
           u^{(1)} & \bar u^{(1)} & v^{(1)} & \bar v^{(1)}
               & w_-^{(1)} & w_+^{(1)}
        \end{bmatrix}, \quad\mbox{for $\W^{cs}(L_1)$}.
\end{aligned} 
\end{equation}
These choices lead to complex systems of ODE $\dot z=G(z)$, and therefore to
complex parameterizations. Following \cite{mamotreto}, the original real
center-unstable and center-stable manifolds are obtained, for real $s$, as
\begin{equation} \label{eq:canviparam}
   \tilde{W}(s) = Z_0 + P W (Cs),
\end{equation}
where $C$ is the $d\times d$ block-diagonal, square matrix
\begin{equation} \label{eq:cmatrix}
   C=
   \begin{pmatrix}
      1 & \bmi & 0 &  0 & 0 \\
      1 & -\bmi & 0 &  0 & 0 \\
      0 &  0 & 1 & \bmi & 0 \\
      0 &  0 & 1 & -\bmi & 0 \\
      0 &  0 & 0 &  0 & 1 \\
   \end{pmatrix}
   .
\end{equation}

The ordering of eigenvalues in
\eqref{eq:ordegvwcu} (resp.~\eqref{eq:ordegvwcs}) implies that $\{s_5=0\}$
describes $\W^c(L_{2})$ (resp.~$\W^c(L_{1})$), $\{s_3=s_4=0\}$ describes the
unstable (resp.~stable) manifold of the planar Lyapunov family of periodic
orbits, and $\{s_1=s_2=0\}$ describes the unstable (resp.~stable) manifold of
the vertical Lyapunov family of periodic orbits. By an intersection argument,
$\{s_3=s_4=s_5=0\}$ describes the planar Lyapunov family of p.o.~and
$\{s_1=s_2=s_5=0\}$ describes the vertical Lyapunov family of p.o.

Thanks to conditions (\ref{en:nointres_a}) and (\ref{en:nointres_b})
above,
parameter space can be considered a fibered space in which the base of each
fiber is described by the coordinates $s_1,\dots,s_4$, and $s_5$ is the
coordinate on each fiber. The reduced flow sends
fibers to fibers, since
\[
   \phi_t^i(s_1,\dots,s_4,s_5^{(1)})=
   \phi_t^i(s_1,\dots,s_4,s_5^{(2)}),
   \quad i=1,\dots,4,
\]
for all $s_1,\dots,s_4,s_5^{(1)},s_5^{(2)}$. Recall that we are computing
center stable (resp.~unstable) manifolds so any initial condition in the
manifold will tend to the center manifold forward (resp.~backward) in
time. Since the center manifold is described by $s_5=0$, this means that the
last component of the reduced flow will also
tend to 0. As a consequence, for $\tilde W (s_1,\dots,s_4,0)$ in any object of
the center manifold (fixed point, periodic orbit, invariant torus, etc.), the
corresponding fiber in the stable (resp.~unstable) manifold of the object is
given by $\tilde W(s_1,\dots,s_4,s_5)$ for $s_5\neq0$. Of course, $|s_5|$ has to
be small enough for the expansions to be accurate.

\section{Meshing iso-energetic slices of the center manifold}
\label{sec:mesh}

In this section, we present an strategy
to obtain equally-spaced meshes of iso-energetic slices of center manifolds of
center$\times$center$\times$saddle fixed points of
3-degrees-of-freedom, autonomous Hamiltonian systems. A first approach to deal
with this problem would consist
in globalizing by means of numerical integration the classical
representation of the center manifold
as a sequence of iso-energetic Poincaré sections \cite{esa3, 1999JoMa,
2005GoMaMo}. However, this procedure involves the evaluation of the
expansions obtained from the PMFPF, which is expensive in terms of computational
time. Instead of that, a
geometrical approach is taken in which, by considering the
quadratic part of the Hamiltonian applied to the parameterization, the
iso-energetic slice is described in terms of a deformed solid ellipsoid. The
strategy is used to perform an exploration in order to determine the order and
energy ranges to be used in the computations of the following sections.

Consider parameterizations $\tilde W^{(1)}(s)$ of $\W^{cs}(L_1)$ and $\tilde
W^{(2)}(s)$ of $\W^{cu}(L_2)$, $s \in \R^5$,
computed as described in the previous section. An iso-energetic slice of the
corresponding center manifold for an energy level $h$ is given by the set of
points defined by
\begin{equation}\label{eq:isoenslice}
   S_j^h = \{ \hat s \in \R^4 : H(\Wt^{(j)}(\hat{s},0)) = h\},
\end{equation}
for $\hat{s} = (s_1, \dots, s_4)^\top$ and $j$ denoting the libration point
$L_j$.  
Consider the eigenvectors that make up the matrix given by \eqref{eq:pmatrix}
and their corresponding real and imaginary parts as introduced in
\eqref{eq:eigvctrsri}. If they are chosen with suitable norms, the
matrix
\begin{equation} \label{eq:qmatrix}
   Q = 
   \begin{bmatrix}
      u_r & v_r & w_+ & u_i & v_i & w_-
   \end{bmatrix}
\end{equation}
can be made symplectic and can be used to define the following 
change of variables:
\begin{equation}\label{eq:canvisimp}
   Z = L_j + Qy.
\end{equation}
Since $\dot Z = F(Z)$ is a Hamiltonian system with Hamiltonian $H$ given by
\eqref{eq:hamrtbp}, the resulting system of ODE described by $\dot y = Y(y)$ is
also Hamiltonian with Hamiltonian
\begin{equation*}
   \tilde H(y) = H (L_j + Qy).
\end{equation*}

The change presented in \eqref{eq:canvisimp} transforms the linear part of the
new system, that is the differential of the original vector field evaluated at
the fixed point, into a realignment of its real Jordan form
\cite{perko96}. 
Since  
\begin{equation*}
      \dot y = \begin{pmatrix} 
         0 & 0 & 0 & \omega_p & 0 & 0 \\
         0 & 0 & 0 & 0 & \omega_v & 0 \\
         0 & 0 & \lambda & 0 & 0 & 0 \\
         -\omega_p & 0 & 0 & 0 & 0 & 0 \\
         0 & -\omega_v & 0 & 0 & 0 & 0 \\
         0 & 0 & 0 & 0 & 0 & -\lambda \\
      \end{pmatrix} y + \mathcal O (\norm{y}^2),
\end{equation*}
and 
$h_0:=\tilde H(0) = H(L_j)$, 
then the Hamiltonian is 
\[
   \tilde H(y) = h_0 + \frac{\omega_p}{2}(y_4^2 + y_1^2) +
   \frac{\omega_v}{2}(y_5^2+y_2^2) + \lambda y_3y_6 + \mathcal{O} (\norm{y}^3).
\]

By relating the matrices $P$
and $Q$, we will be able to provide an expression for the quadratic terms of
$H\bigl(\Wt(\hat s,0)\bigr)$. This is done in the following lemma.
\begin{lemma} \label{lemma}
   Assume that system \eqref{eq:sysorigpm} is
   Hamiltonian with Hamiltonian $H$, and consider the parameterization $
   \Wt \colon \R^5 \longrightarrow \R^6 $ where the matrix $P$
   in the change in \eqref{eq:canviparam} is given by any of
   the two matrices in \eqref{eq:pmatrix}. Then, for any $s=(\hat s, 0)$ it is satisfied
   \begin{equation}
      H (\Wt(\hat s, 0)) =
      h_0 + 2\omega_p
      (s_1^2+s_2^2)+2\omega_v(s_3^2+s_4^2) + \mathcal O (\norm{s}^3).
      \label{eq:hamsymp}
   \end{equation}
\end{lemma}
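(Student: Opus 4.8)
The plan is to compare the two linear changes of variables used in the paper: the \emph{symplectic} real change $Z = L_j + Q y$ from \eqref{eq:canvisimp}, which brings the quadratic part of $H$ to the normal form $h_0 + \tfrac{\omega_p}{2}(y_4^2+y_1^2) + \tfrac{\omega_v}{2}(y_5^2+y_2^2) + \lambda y_3 y_6$, and the \emph{complex-diagonalizing} change $Z = Z_0 + P z$ composed with $z = W(Cs)$ from \eqref{eq:canviparam}. First I would observe that, by the order-$0$ and order-$1$ data in \eqref{eq:firstorders}, the parameterization satisfies $W(z) = (s_1,\dots,s_5,0,\dots,0)^\top + \mathcal O(\|s\|^2)$ when evaluated in the $z$-variables, hence $\Wt(\hat s,0) = Z_0 + P\,(Cs)_{1:5\text{-lift}} + \mathcal O(\|s\|^2)$ with the lift meaning the vector whose first five entries are $Cs$ and whose sixth is zero. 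Since $s_5 = 0$, only the first four columns of $P$ (i.e.\ $u^{(j)},\bar u^{(j)},v^{(j)},\bar v^{(j)}$) enter at linear order. Therefore $\Wt(\hat s,0) - L_j = P_{1:4}\,(C\hat s)_{1:4} + \mathcal O(\|s\|^2)$.

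Next I would express $P_{1:4}(C\hat s)_{1:4}$ in terms of the real/imaginary parts of the eigenvectors. Using \eqref{eq:eigvctrsri}, $u^{(j)} = u_r - \bmi u_i$, $v^{(j)} = v_r - \bmi v_i$, together with the first $4\times4$ block of $C$ from \eqref{eq:cmatrix}, a short computation gives
\[
   u^{(j)} s_1 + \bar u^{(j)} s_2 + \cdots
   = 2 u_r\, s_1 + 2 u_i\, s_2 + 2 v_r\, s_3 + 2 v_i\, s_4 + \text{(terms that cancel)},
\]
— more precisely, $(s_1+\bmi s_2)u^{(j)} + (s_1-\bmi s_2)\bar u^{(j)} = 2\Re\big((s_1+\bmi s_2)u^{(j)}\big) = 2(s_1 u_r + s_2 u_i)$ since $u^{(j)} = u_r - \bmi u_i$; similarly for the $v$-block. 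Hence, up to $\mathcal O(\|s\|^2)$,
\[
   \Wt(\hat s,0) - L_j
   = 2\big(s_1 u_r + s_2 u_i + s_3 v_r + s_4 v_i\big) + \mathcal O(\|s\|^2).
\]
Comparing with $Z = L_j + Q y$ and the column ordering of $Q$ in \eqref{eq:qmatrix}, namely $Q = [\,u_r\ v_r\ w_+\ u_i\ v_i\ w_-\,]$, this says that in $y$-coordinates $\Wt(\hat s,0)$ corresponds to $y = (2s_1,\,2s_3,\,0,\,2s_2,\,2s_4,\,0) + \mathcal O(\|s\|^2)$.

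Finally I would substitute this into the normal form of $\tilde H$: since $H(\Wt(\hat s,0)) = \tilde H(y)$ with $y$ as above, and $\tilde H(y) = h_0 + \tfrac{\omega_p}{2}(y_1^2+y_4^2) + \tfrac{\omega_v}{2}(y_2^2+y_5^2) + \lambda y_3 y_6 + \mathcal O(\|y\|^3)$, plugging $y_1 = 2s_1$, $y_4 = 2s_2$, $y_2 = 2s_3$, $y_5 = 2s_4$, $y_3 = y_6 = \mathcal O(\|s\|^2)$ yields
\[
   H(\Wt(\hat s,0)) = h_0 + \tfrac{\omega_p}{2}\big((2s_1)^2+(2s_2)^2\big) + \tfrac{\omega_v}{2}\big((2s_3)^2+(2s_4)^2\big) + \mathcal O(\|s\|^3) = h_0 + 2\omega_p(s_1^2+s_2^2) + 2\omega_v(s_3^2+s_4^2) + \mathcal O(\|s\|^3),
\]
which is \eqref{eq:hamsymp}. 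The $\lambda y_3 y_6$ term and the cubic remainder of $\tilde H$ are both absorbed into $\mathcal O(\|s\|^3)$ because $y_3,y_6 = \mathcal O(\|s\|^2)$ and $y_{1,2,4,5} = \mathcal O(\|s\|)$.

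**Main obstacle.** The only delicate point is the normalization of the eigenvectors: the identity relies on the \emph{same} scaled eigenvectors $u^{(j)},v^{(j)}$ appearing in $P$ and the real/imaginary parts $u_r,u_i,v_r,v_i$ appearing in the symplectic $Q$, and on $Q$ being genuinely symplectic (which is what forces the clean normal-form coefficients $\omega_p/2$, $\omega_v/2$ rather than arbitrary positive multiples). I would make explicit that \eqref{eq:pmatrix} and \eqref{eq:qmatrix} are built from one common choice of normalized eigenvectors, so that the factor-of-$2$ bookkeeping through $C$ is the only thing to track; everything else is the linear algebra above plus the tangency conditions \eqref{eq:firstorders}. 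A secondary point worth a sentence is that the $\mathcal O(\|s\|^2)$ error in $\Wt(\hat s,0)-L_j$ contributes only $\mathcal O(\|s\|^3)$ to $H$ because $H$ vanishes to first order at $L_j$ (it is a critical point), so cross terms between the linear and quadratic parts of $\Wt$ start at order three.
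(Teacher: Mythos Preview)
Your proposal is correct and follows essentially the same route as the paper's proof: compute the linear part of $\Wt(\hat s,0)-L_j$ via $P$, $C$ and \eqref{eq:firstorders}, rewrite it as $2Q\,(s_1,s_3,0,s_2,s_4,0)^\top$, and substitute into the quadratic normal form of $\tilde H$. Your closing remarks on the shared normalization of eigenvectors in $P$ and $Q$ and on why the $\mathcal O(\|s\|^2)$ remainder in $\Wt$ contributes only $\mathcal O(\|s\|^3)$ to $H$ (because $L_j$ is a critical point of $H$) are exactly the points the paper's proof uses implicitly.
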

\begin{proof}
   Consider the change given by \eqref{eq:canviparam}. From \eqref{eq:cmatrix},
   \[
   C(\hat s, 0)^\top = (s_1+\bmi s_2, s_1 - \bmi s_2,
                        s_3+\bmi s_4, s_3-\bmi s_4,0)^\top.
   \]
   On the other hand, from
   \eqref{eq:firstorders}
   \[
      W(s) = W_1(s) + \mathcal O (\norm{s}^2) = (s_1, \dots, s_d, 0, \dots,
      0)^\top +
      \mathcal O (\norm{s}^2).
   \]
   Therefore,
   \[
   \begin{split}
      P W (C (\hat s, 0)^\top)
         &= P W_1(s_1+\bmi s_2, s_1 - \bmi s_2,
                  s_3+\bmi s_4, s_3-\bmi s_4,0)^\top
            + \mathcal O (\norm{s}^2)\\
      &=
      \begin{bmatrix}
         u & \bar u & v & \bar v & w_+ & w_-
      \end{bmatrix}
      (s_1+\bmi s_2, s_1 - \bmi s_2,
       s_3+\bmi s_4, s_3-\bmi s_4,0,0)^\top  + \mathcal O (\norm{s}^2)\\
      & = 2 \Re \big((u_r-\bmi u_i)(s_1+\bmi s_2)\big)
            + 2\Re\big((v_r-\bmi v_i) (s_3 + \bmi s_4)\big)
            + \mathcal O (\norm{s}^2) \\
      & = 2  
      \begin{bmatrix}
         u_r & v_r & w_+ & u_i & v_i & w_- 
      \end{bmatrix}
      (s_1, s_3, 0 , s_2, s_4, 0 )^\top + \mathcal O (\norm{s}^2) \\
      & = 2 Q \sigma(\hat s) + \mathcal O (\norm{s}^2),
   \end{split}
   \]
   where $Q$ is the defined in \eqref{eq:qmatrix} and
   $\sigma \colon \R^4 \longrightarrow \R^6$ is given
   by $\sigma (\hat s) = (s_1, s_3, 0, s_2, s_4 , 0)^\top$. Then, 
   \[
      \begin{split}
         H (\Wt(\hat s, 0)) &= H(L_j + P W (C (\hat s, 0)^\top)) \\
         &= H(L_j + 2 Q \sigma(\hat s) + \mathcal O
      (\norm{s}^2)) \\
         & = H(L_j + Q(2\sigma(\hat s))) + \mathcal O (\norm{s}^3) \\
         & = \tilde H (2 \sigma(\hat s)) = h_0 + 2\omega_p
         (s_1^2+s_2^2)+2\omega_v(s_3^2+s_4^2) + \mathcal O (\norm{s}^3), \\
      \end{split}
   \]
   as stated.
\end{proof}

Suppose that we want to mesh the iso-energetic slice
\eqref{eq:isoenslice} in $s_1$, $s_2$ and $s_4$ and recover the value
of $s_3$ from the energy. Disregarding cubic terms in
\eqref{eq:hamsymp},
\[
   h - h_0 = 2\omega_p (s_1^2+s_2^2)+2\omega_v(s_3^2+s_4^2),
\]
from which it is clear that the maximum value that can take $s_3^2$ is
$(h-h_0)/2\omega_v$.
Then, still disregarding cubic
terms in \eqref{eq:hamsymp}, the projection over
$(s_1,s_2,s_4)$ of $S_j^h$ would be 
\begin{equation}\label{eq:solidellipsoid}
   h \geq h_0 + 2\omega_p (s_1^2+s_2^2)+2\omega_v s_4^2,
\end{equation}
that is a solid ellipsoid whose boundary corresponds to
equality in this last equation.
Actually, since for $(s_1,s_2,s_4)$ in the interior of the ellipsoid we
have two solutions for $s_3$, we can consider that $S^h_j$ can be represented
in $s_1,s_2,s_4$ as two solid ellipsoids glued by its boundary.

However, since the original equation presents terms of order $\mathcal O_3$,
the projection in $s_1,s_2,s_4$ of the iso-energetic slice is not
the solid ellipsoid given by
\eqref{eq:solidellipsoid} but a perturbed one. In order to mesh it, we need
to find first a parallelepiped that would contain the whole 
ellipsoid perturbed by these higher order terms.  This is done by
defining the following maximal intervals for the 4 components of $\hat s$, 
\[
   \begin{split}
   s_1, \, s_2  \in I_{\epsilon} &:=
               \left [ - \sqrt{\frac{h-h_0}{2\omega_p}} -
   \epsilon, \sqrt{\frac{h-h_0}{2\omega_p}} + \epsilon \right ] \\
   s_3,\, s_4  \in J_{\epsilon} &:=
               \left [ - \sqrt{\frac{h-h_0}{2\omega_v}} -
   \epsilon, \sqrt{\frac{h-h_0}{2\omega_v}} + \epsilon \right ],
   \end{split}
\]
where the value of $\epsilon$ is heuristically chosen in order to account for
the terms $\mathcal
O_3$.

Therefore, we take equally spaced points over $I_\epsilon$ and $J_\epsilon$
for $s_1$, $s_2$ and $s_4$ and we try to solve, for $s_3$ inside the interval
$J_\epsilon$, the equation $H(\Wt(\hat s, 0))=h$ using the whole
expansion for $\Wt$.  In the case that this equation has no solution, then
the point defined by the $(s_1,s_2,s_4)$ is outside the perturbed ellipsoid.
We consider that the equation has no solution when a root for $s_3$ cannot be
numerically bracketed by dyadic subdivisions of the interval $J_\epsilon$ up to a maximum depth. 

This first strategy to mesh $S_j^h$, namely to take $(s_1,s_2,s_4)$ equally
spaced in $I_\epsilon \times I_\epsilon \times J_\epsilon$ and solve for $s_3$
as described, works satisfactorily but admits several computational
optimizations.  One of them, that will be called ``second strategy'' in order
to compare, is not to use the whole expansion of $\Wt$ when bracketing, but
only a truncation.  Several tests have shown that using order 4 for $\Wt$
when bracketing is enough. A third computational improvement (``third
strategy'') is to skip the terms with $s_5=0$ when evaluating the series for
$\Wt$. %
Our fourth and final strategy is
focused on reducing the number of failed attempts to solve for $s_3$ by
changing
the meshing strategy (but keeping the second and third improvements).
Instead of trying all the points of a bounding parallelepiped,
the iso-energetic slice
of the center manifold is assumed to be convex and the mesh
is generated from inside to outside in the three directions $s_1$, $s_2$,
$s_4$. The first time that, following any direction, a solution for $s_3$
cannot be found, the border of the perturbed ellipsoid is considered to have
been reached and this direction is no longer checked.

In order to visualize how these changes improve the efficiency of the
algorithm, a grid of iso-energetic points in $S_1^h$ for $h=-1.5860$ has been
computed. Fixing $\epsilon=0.05$, we take 25 points for each $s_1$, $s_2$ and
$s_4$ inside intervals $I_{\epsilon}$ and $J_\epsilon$ to finally obtain a 
mesh of $S^1_h$ made of 12155 points. In Table \ref{tab:comptimes} the computing
times for
each strategy are presented. As it can be seen, the improvement is quite
drastic and strategy $4$ is the one that presents a better calculation time,
so from now on all the computations of the center-stable and center-unstable
manifolds will be done by using this strategy.

\begin{table}[tbp]
   \centering
   \begin{tabular}{c|c|c|c}
      strategy 1 & strategy 2 & strategy 3 & strategy 4 \\ \hline
      \texttt{86819} & \texttt{526.36} & \texttt{164.15} &
      \texttt{94.409}
   \end{tabular}
   \caption{For the different strategies, computing user time (in seconds) of
   an iso-energetic slice of $W^{c}(L_1)$ on an Intel(R) Xeon(R)
   CPU E5-2630 v3 @ 2.40GHz}
   \label{tab:comptimes}
\end{table}

Our next goal is to determine the domain of validity of the computed expansions
for both center-stable and center-unstable manifolds in terms of the order for
the approximations. Following \cite{mamotreto}, we use the error in the orbit:
given an initial condition on the center-stable or center-unstable manifold
$s_0=(\hat s, \delta), \hat s \in S_j^h$ that is described by the
parameterization $\Wt^{(j)}(s)$, the error in the orbit is given by
\begin{equation} 
   e _O(T,s_0) = \sup_{t\in[0,T]} \norm{\Wt^{(j)}(\phi_t(s_0)) -
   \Phi_t(\Wt^{(j)}(s_0))}, 
\end{equation} 
for a fixed amount of time $T$, that could be negative meaning integration
backwards in time.

Several error experiments have led us to choose
$|\delta|=10^{-3}$. The sign
of $\delta$ is chosen in order to follow the branch of the center-unstable or
center-stable manifold corresponding to the connections we look for, this is,
the branch of $\W^{cu}(L_2)$ that goes towards $L_1$ and the branch of
$\W^{cs}(L_1)$ that comes from $L_2$.  As sketched in Figure~\ref{fig:veps},
the eigenvectors $w_+$ and $w_-$ can be chosen with positive $x$ components
and opposite $y$ components for both libration points. Then, the choice of the
sign of $\delta$ turns out to be negative for the case of $\W^{cs}(L_1)$ and
positive for $\W^{cu}(L_2)$.  In addition, the sign of $T$ is chosen so that
numerical integration is always towards the center manifold. Finally,
in order to use a value of $T$ with physical meaning, it is chosen to be
$|T|=3$, which is close to the periods of the orbits of the planar and vertical
Lyapunov families of p.o.

\begin{figure}[tbp]
   \centering
   \includegraphics[width=.6\textwidth]{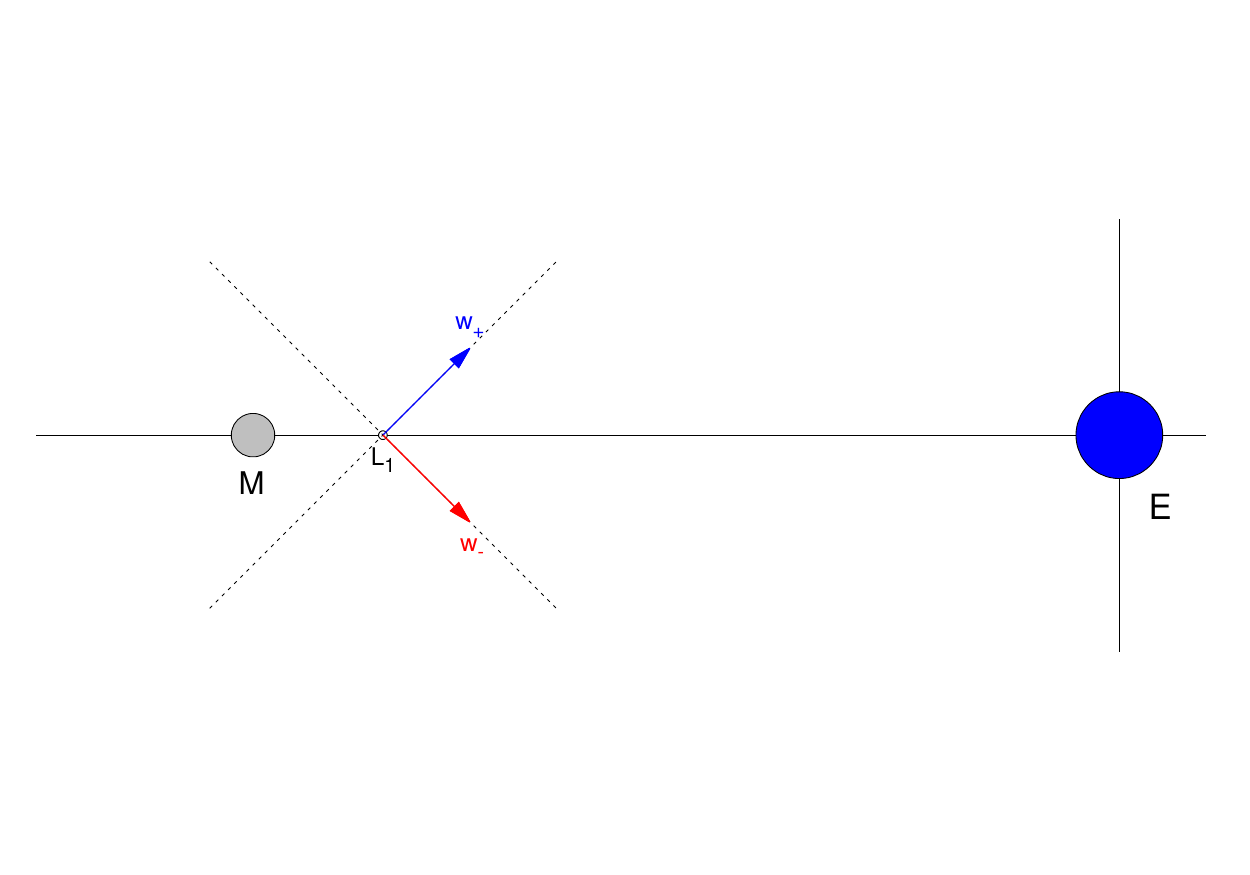}
   \caption{\label{fig:veps}
      Convention of the eigenvectors of the hyperbolic part of $L_1$.
      Analogously for $L_2$.
}
\end{figure}

\begin{figure}[tbp]
   \centering
   \includegraphics{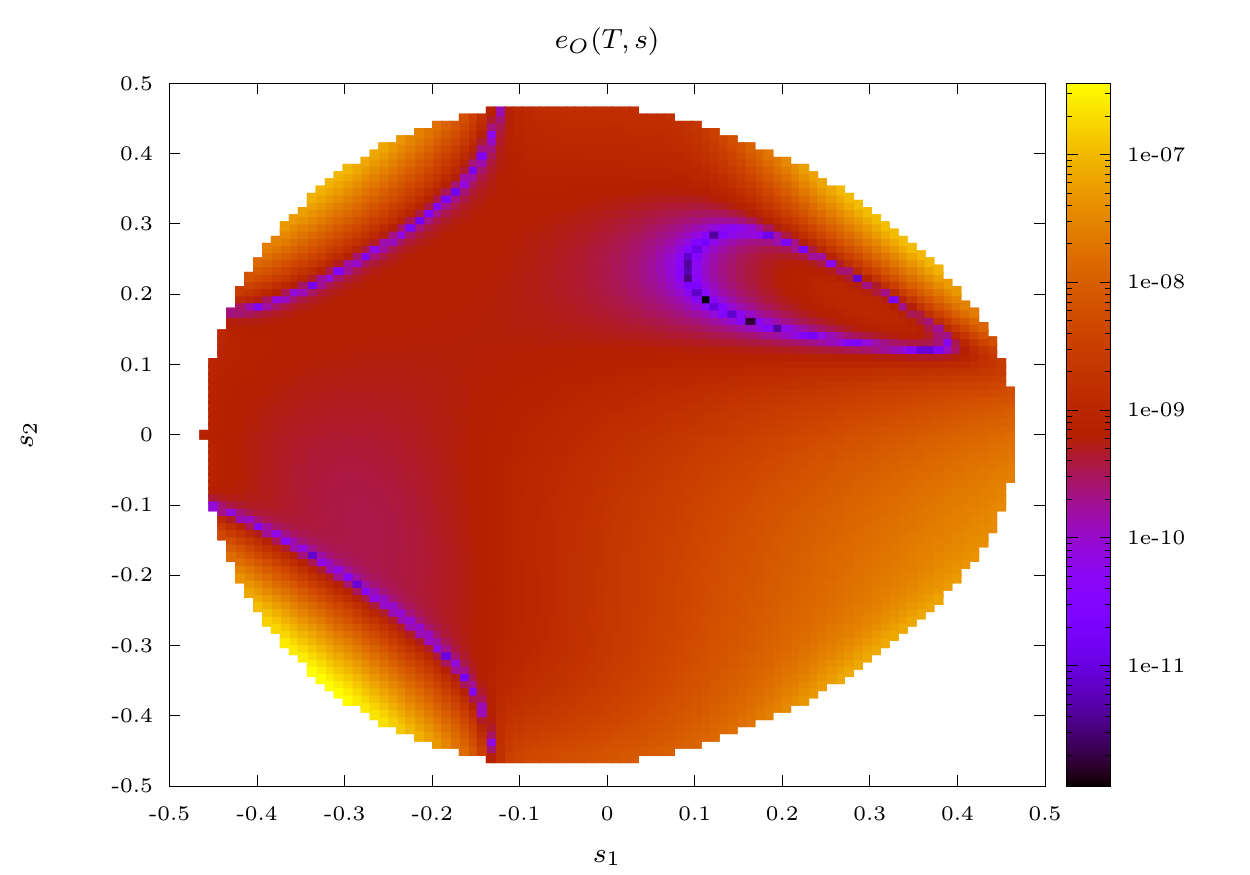}
   \caption{\label{fig:errors}For the expansion of $\W^{cs}(L_1)$ computed up
   to order 20, evaluation of the error in the orbit estimate $e_O$ for
   all points $\hat s$ in the mesh of $S_1^h$ satisfying $s_4=0$, $h=-1.5852$.}
\end{figure}

As a first example, a set of points in $S_1^h$ for $h=-1.586$ is computed
using a parameterization of order 20 of $\W^{cs}(L_1)$.  For each point $(\hat
s, \delta)$ with $\hat s \in S^h_1$, the corresponding orbit error is
computed.  Figure~\ref{fig:errors} represents these errors as a color gradient
map.  For clarity, Figure~\ref{fig:errors} only shows the error for the points
in the mesh of $S_1^h$ with $s_4=0$. With just these points it can be seen
already that the
error is quite heterogeneous in the energy level.

This procedure can be repeated for several energies and several orders of the
expansions in order to determine which order we need to choose when
computing the parameterizations for each energy level. 
In order to consider several energy levels, we take the
maximum error of all the points of the mesh of an iso-energetic slice
and 
plot the error as a function of energy. This is done in
Figure~\ref{fig:errbyorder} left, for $\W^{cs}(L_1)$, and
Figure~\ref{fig:errbyorder} right, for $\W^{cu}(L_2)$. 
Figure~\ref{fig:errbyorder} shows how the error decreases with the order of the
expansions, but at each order increases with energy.
By using order 30, these figures show that we can reach a value of $h=-1.575$
in both libration points with error bounded by
$10^{-4}$.
So, from now on, all the computations will be done using
expansions up to order 30.

\begin{figure}[tbp]
   \centering
   \includegraphics{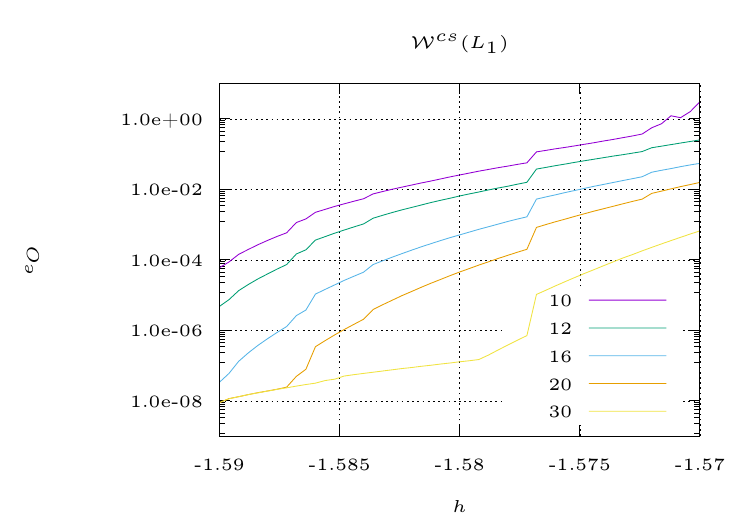}
   \includegraphics{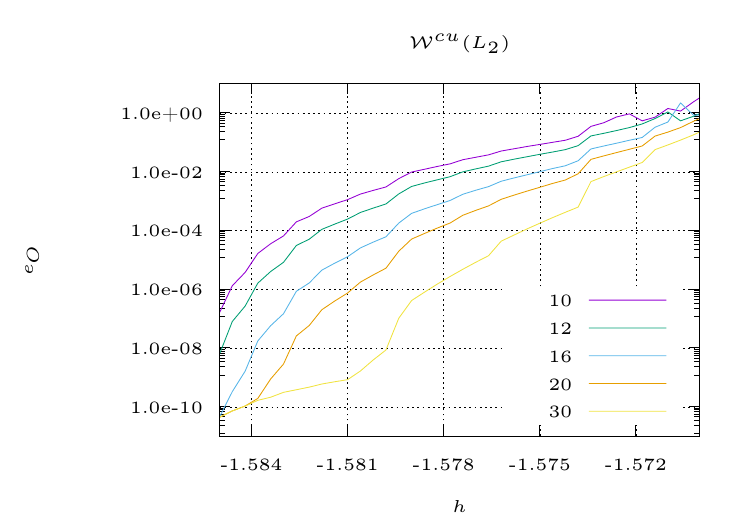}
   \caption{\label{fig:errbyorder}For the expansions of $\W^{cs}(L_1)$
   and $\W^{cu}(L_2)$ computed up to indicated orders, evaluation of the
   maximum error estimate $e_O$ for $|\delta| = 10^{-3}$, $|T|=3$ and different
   energy levels.}
\end{figure}

\section{Computing heteroclinic connections} \label{sec:conexs}

This section is devoted to introduce the main methodology to compute
heteroclinic connections between center manifolds once the iso-energetic slices
of center manifolds have been obtained. For this
purpose, and as it was indicated in Section~\ref{sec:param}, the
invariant manifolds $\W^{cs}(L_1)$ and $\W^{cu}(L_2)$ are considered and, at the
same time, the meshes presented in Section~\ref{sec:mesh} will be used as
starting guesses.

Consider two (real) parameterizations for $\W^{cs}(L_1)$ and
$\W^{cu}(L_2)$, computed according to Section~\ref{sec:param}, that will
be denoted by $\Wt^{(1)}(s)$ and $\Wt^{(2)}(s)$, for $s$ in a neighborhood
of the origin in $\R^5$. From the error explorations of the previous Section, 
for $\Wt^{(1)}(s)$ and $\Wt^{(2)}(s)$ to be accurate we need that
$|s_5|\leq\delta:=10^{-3}$. Therefore, in order to obtain heteroclinic
connections, and also according to the discussion on manifold branches of the
previous Section, $\W^{cs}(L_1)$ (resp.~$\W^{cu}(L_2)$) needs to be extended
through numerical integration, starting from points $\Wt^{(1)}(s)$ with
$s_5=\delta^s:=-\delta$ (resp.~$\Wt^{(2)}(s)$ with $s_5=\delta^u:=\delta$).

Following \cite{2000GoMa,2005GoMaMo,2006CaMa,2007aArMa}, we look for
heteroclinic connections by trying to find intersections of $\W^{cs}(L_1)$ and
$\W^{cu}(L_2)$ in an iso-energetic Poincaré section of energy $h$. This
Poincaré section is given by an hypersurface $\Sigma:=\{g(Z) = 0\}$ for $g
\colon\R^6 \longrightarrow \R$ that should be known to be intersected by
$\W^{cs}(L_1)$ and $\W^{cu}(L_2)$ at the energy level $h$. Since the
connections we are looking for are the ones that join the vicinities of $L_1$
and $L_2$, in our case it will be convenient to take $g(Z) = x - \mu +
1$, for which $\Sigma$ is
perpendicular\index{perpendicular} to the $x$ axis and contains the the second
primary of the RTBP.  We will denote as $P^{\pm k}_\Sigma$ a Poincaré map associated to
$\Sigma$, with $k$ being the number of crossings with the section and
the sign indicating whether integration is done
forward or backward in time. Recalling the $S^h_j$ sets defined in
\eqref{eq:isoenslice}, heteroclinic connections are found by solving the
following equation: find $\hat s^u \in S_2^{h}, \hat s^s \in S_1^{h}$, such
that
\begin{equation}\label{eq:poinceq}
   P_\Sigma^{-k}(\Wt^{(1)}(\hat s^s, \delta^s)) -
   P_\Sigma^{+l}(\Wt^{(2)}(\hat s^u, \delta^u)) = 0.
\end{equation}
The indexes $k$ and $l$ must be chosen in such a way that $k+l$ is an even
number and any pair satisfying that $k+l=2n$ for some $n \in\N$ will give
identical results.

Any root finding method to solve system \eqref{eq:poinceq} requires a good
initial condition. In order to find such initial conditions, we
first choose an energy level $h$ for which heteroclinic connections are
expected to be found (see the next Section). For this energy level, we apply
the meshing procedure of the previous Section in order to obtain points $\hat
S^h_1\subset S^h_1$, $\hat S^h_2\subset S^h_2$. Then, the following sets are
obtained:
\[
   \{P_\Sigma^{+l}(\Wt^{(2)}(\hat s,
   \delta^u))\}_{\hat s \in \hat S^h_2},
   \quad 
   \{P_\Sigma^{-k}(\Wt^{(1)}(\hat s,
   \delta^s))\}_{\hat s \in \hat S^h_1}.
\]
For each point on these sets we compute its Euclidean distance to every
point of the other set and we store the minimum distance and the 
point from the other manifold that gives it.
Then, for each $\hat s^s \in
\hat S_1^h$, 
we obtain the triple $(\hat s^s, \hat s^u(\hat s^s), d(\hat s^s))$ where 
\begin{equation} \label{eq:distance}
   \begin{split}
   d(\hat s^s) &:= \min_{\hat s^u \in \hat S_2^h} 
   \text{dist}(P_\Sigma^{-k}(\Wt^{(1)}(\hat s^s, \delta^s)),
   P_\Sigma^{+l}(\Wt^{(2)}(\hat s^u, \delta^u))), \\
   \hat s^u(\hat s^s) &:= \arg \min_{\hat s^u \in \hat S^h_2} 
   \text{dist}(P_\Sigma^{-k}(\Wt^{(1)}(\hat s^s, \delta^s)),
   P_\Sigma^{+l}(\Wt^{(2)}(\hat s^u, \delta^u))),
   \end{split}
\end{equation}
this is, $\hat s^u(\hat s^s)$ is the value of $\hat s^u$ that gives the minimum in
the expression of $d(\hat s^s)$.
Note that the computation could also be done in terms of $\hat s^u \in
\hat S_2^h$. By defining a
numerical tolerance $\xi$, we can take pairs $(\hat s^s, \hat s^u(\hat s^s))$
such that $d(\hat s^s) \leq \xi$ as initial approximations to solve equation
\eqref{eq:poinceq}. 

The system of equations we actually solve in order to find heteroclinic
connections is not \eqref{eq:poinceq} but a modification of it.
In order to avoid the
computation of iterated Poincaré maps and their differentials, we add the
equation of the Poincaré section to the system of equations to solve and we
also introduce as unknowns $T^s, T^u \in \R$, which are the corresponding times of flight
from the points $\Wt^{(1)}(\hat s^s, \delta^s)$ and $\Wt^{(2)}(\hat s^u,
\delta^u)$, respectively, to the section $\Sigma$.  Since $T^s, T^u$ may become
large, and in order to maintain high precision, a multiple-shooting
strategy is
used: we add as unknowns new points $x_0^s,\dots,x_{m^s}^s$ along the
stable branch and additional points $x_0^u, \dots, x_{m^u}^u$ along the
unstable one. With that, and considering the corresponding matching equations
in these variables, the following system of equations equivalent to
\eqref{eq:poinceq} is obtained:
\begin{equation}\label{eq:systhetero}
   \begin{split}
      \Wt^{(2)}(\hat s^u, \delta^u) - x_0^u &= 0,\\
      \Phi_{T^u/m^u}(x_i^u)-x_{i+1}^u &= 0,
               \qquad i=0,\dots,m^u-1,\\
      \Wt^{(1)}(\hat s^s, \delta^s) - x_0^s &= 0,\\
      \Phi_{T^s/m^s}(x_i^s)-x_{i+1}^s &= 0,
               \qquad i=0,\dots, m^s-1,\\
      g(x_{m^u}^u) &= 0,\\
      H(x_{m^u}^u) - h &= 0,\\
      x_{m^u}^u - x_{m^s}^s &= 0,
   \end{split}
\end{equation}
for the set of unknowns
\begin{equation}\label{eq:unkn}
      \hat s^u,T^u,x_0^u,\dots,x_{m^u}^u,
      \hat s^s,T^s,x_0^s,\dots,x_{m^s}^s.
\end{equation}

In spite of being apparently more cumbersome, a solver for this system is
actually simpler to code than one for \eqref{eq:poinceq}, since
Poincaré maps are no longer explicitly present, and the introduction of multiple shooting
points prevents the appearance of the parameterization composed in the flow.
The differential of
the flow is computed through numerical integration of the first variational
equations.

In this system, the number of unknowns is $n(m^u+m^s+2)+2d$, while the
number of equations is $n(m^u+m^s+3)+2$ so, for any
values of $m^u$, $m^s$, we will have $2d-n-2$ more unknowns than equations.
Particularly, for $n=6$ and $d=5$, this is 2 more unknowns than equations.
Since, by a dimensional argument (and as numerically found in a similar
problem \cite{2007aArMa}), the set of heteroclinic connections in an energy
level is expected to be two-dimensional, having 2 more unknowns than equations
is coherent with this fact, and system \eqref{eq:systhetero} should have full
rank. Numerical checks with SVD (see e.g.~\cite{golub-vanloan}) have
confirmed this last fact.

System \eqref{eq:systhetero} has been solved by performing minimum-norm
lest-squares Newton corrections, that can be computed through QR
decompositions with column pivoting (\cite{2001GoMo,2019aMo}) as long as the
dimensions of the kernel is known. This methodology addresses the fact that
number of equations and unknowns is different. Moreover, from
\eqref{eq:systhetero}, several other systems of interest can be obtained by
eliminating equations and/or fixing values of unknowns. For instance, one can
fix $s_3^s= s_4^s=s_3^u=s_4^u=0$
in order to look for heteroclinic connections between planar Lyapunov p.o.,
thanks to the parameterization style we have used (see Section~\ref{sec:param}).
All of these derived systems are easily coped with by the same routine.

\section{Numerical results}\label{sec:numres}

In this section we apply all the developments of
Sections~\ref{sec:param},~\ref{sec:mesh},~\ref{sec:conexs} to the computation
of whole sets of heteroclinic connections between center manifolds of $L_1$,
$L_2$ of the spatial, circular RTBP in the Earth-Moon case for several
energy levels. All the numerical integration has been performed through
a Runge-Kutta-Felbergh method of orders 7 and 8 with relative
tolerance $10^{-14}$. System \eqref{eq:systhetero} has been solved
with absolute tolerance $10^{-10}$.  All the explorations presented in this
section have been carried out on a Fujitsu Celsius R940 workstation, with two
8-core Intel Xeon E5-2630v3 processors at 2.40GHz, running Debian GNU/Linux 11
with the Xfce 4.16 desktop. The source code has been written in C, compiled
with GCC 10.2.1 and linked against LAPACK 3.7.0. The code also
uses OpenMP $4.0$ in order to parallelize most of the
computations. All the computational times presented in this Section refer to
total computing (user) time, accounting for all the cores. In the
workstation mentioned, wall-clock time is roughly user time divided by 16.

As mentioned in Section~\ref{sec:conexs}, system \eqref{eq:systhetero} is
easily modified in order to look for heteroclinic connections of planar
Lyapunov orbits. When such connections exist, heteroclinic connections between
tori are expected to be found nearby. As done in previous works
\cite{2005GoMaMo,2006CaMa}, we start by looking for connections
between planar Lyapunov
p.o.~by fixing $k=1$ for a set of different energy levels and varying $j$
until the manifold tubes of the departing and arriving p.o.~of the energy
level seem to intersect at $\Sigma=\{x=\mu-1\}$.  Since neither the planar
Lyapunov p.o.~nor their stable and unstable manifolds have $z,p_z$ components
(this is, they are planar), this is easily visualized in an $xy$ projection. A
$yp_y$ projection of the Poincaré sections of the manifold tubes can confirm
if the manifold tubes intersect or not\footnote{An intersection in the $yp_y$
projection of the Poincaré section is a true intersection, since $x=\mu-1$,
$z=p_z=0$ and the manifold tubes have the same energy.}.

Figure~\ref{fig:lyaps} is a sample of results related to the search of
heteroclinic connections of planar Lyapunov p.o. The left column displays $xy$
projections of the manifold tubes, whereas the right column displays $yp_y$
projections of the Poincaré sections of the manifold tubes. The energies have
been chosen in such a way that $2$ heteroclinic connections of p.o.~exist. It
can be seen that, as energy increases, the planar Lyapunov p.o.~around $L_2$
becomes bigger. Its
unstable manifold tube also becomes bigger, in such a way that, with a
smaller value of $j$, this tube is able to intersect the stable manifold tube
of the planar Lyapunov p.o.~around $L_1$. This happens when going from the
first to the second row of Figure~\ref{fig:lyaps}, and from the second to the
third. Observe also that some sections of the manifold tubes are not
represented as closed curves in the $yp_y$ projections of the right column of
Figure~\ref{fig:lyaps}.  This is because of their proximity to the Moon: we have
stopped numerical integration at two radii distance of its center. This can be
also appreciated in the $xy$ plots of the left column of Figure~\ref{fig:lyaps}.

\begin{figure}[htbp]
   \centering
   \includegraphics{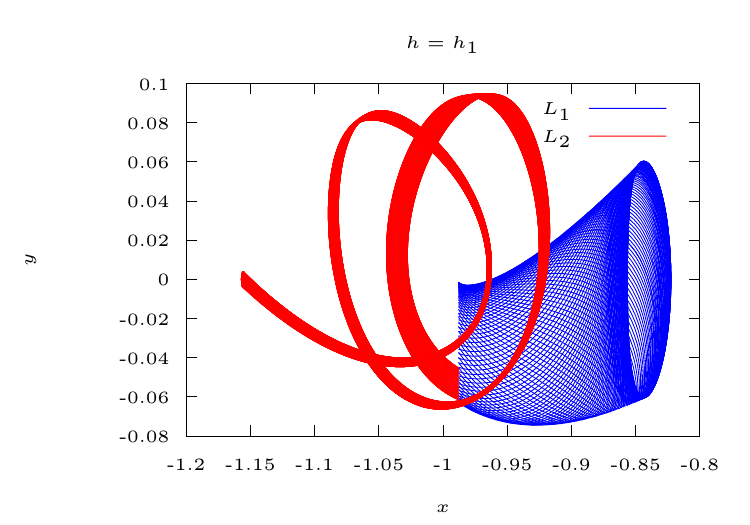}
   \includegraphics{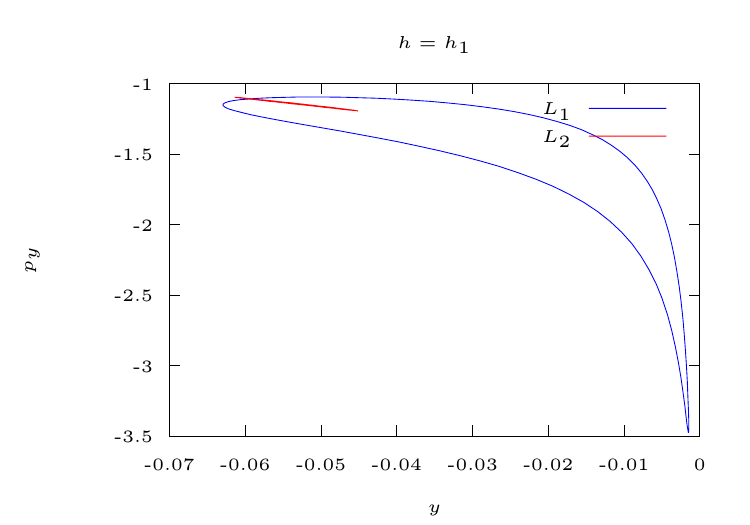}
   \includegraphics{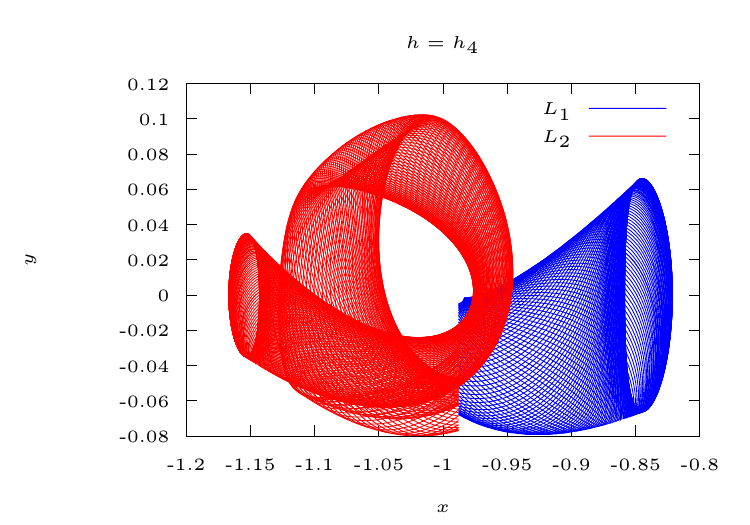}
   \includegraphics{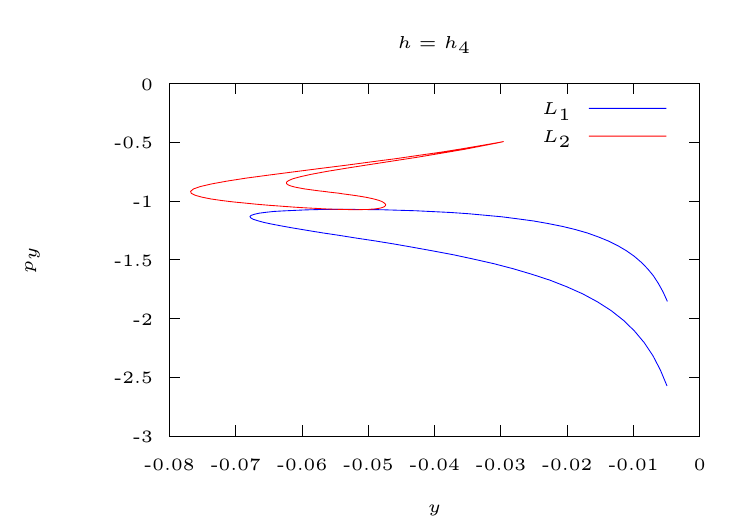}
   \includegraphics{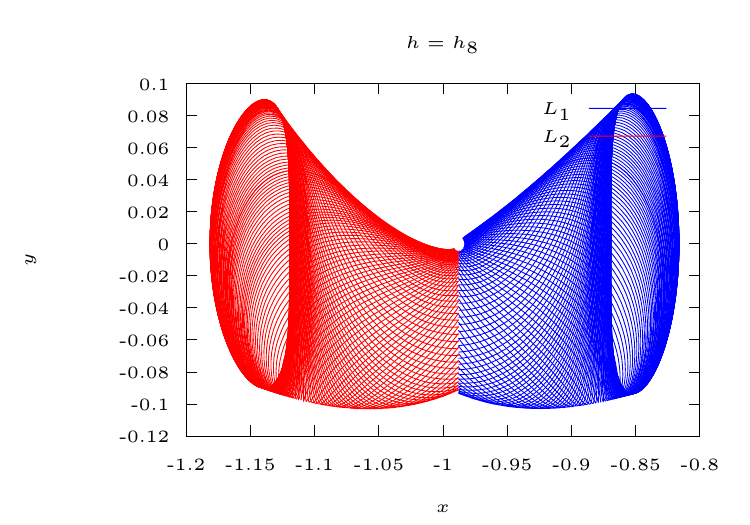}
   \includegraphics{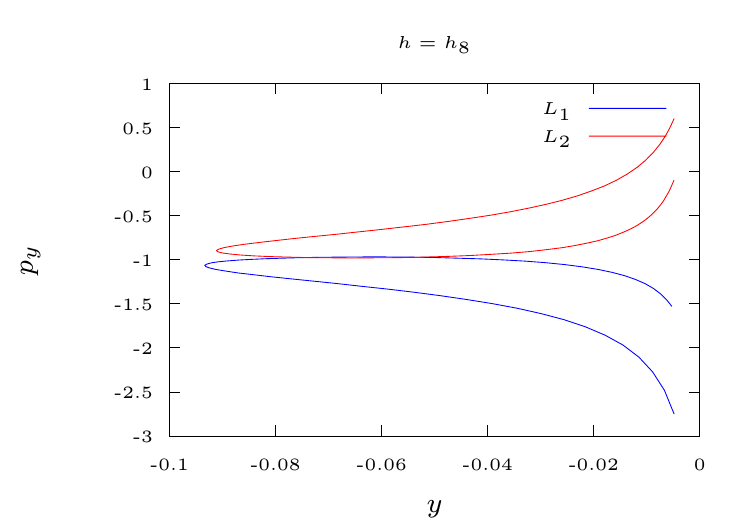}
   \caption{\label{fig:lyaps} 
   Numerical explorations of the unstable and stable branches of the planar
   Lyapunov p.o.~for the energy levels $h_1$, $h_4$
   and $h_8$
   and $j=1,3,5$ crossings.  Left:
   $xy$ view of the
   2-dimensional manifold tubes up to the Poincaré section $\Sigma$.
   Right: Poincaré section of the 2-dimensional manifold tubes
    in the $yp_y$ plane projection. The values
   $\{h_i\}_{i=1}^9$ are given in Table~\ref{tbl:energies}.}
\end{figure}

When looking for heteroclinic connections of planar Lyapunov p.o.~we only
consider primary heteroclinics, in the sense that they correspond to the first
intersection of the manifold tubes. Further intersections are possible, giving
rise to ``higher-order'' heteroclinics. E.g., when in the middle row of
Figure~\ref{fig:lyaps} we locate heteroclinic connections for $j=3$, the
heteroclinic connections for $j=5$ still exist. But they are very tangled,
with one manifold tube performing infinite loops around the other
(see~\cite{2009BaMoOll} for some discussion of this phenomenon). Nearby
heteroclinic connections between tori should inherit this complicated
geometry. They will not be considered here.

In view of Figure~\ref{fig:lyaps}, we set as goal the description of the whole
set of heteroclinic connections from $\W^c(L_2)$ to
$\W^c(L_1)$ for nine energy levels that, in groups of three, correspond to each of
the behaviors of the manifold tubes shown in this figure. The energy levels
chosen are shown in Table~\ref{tbl:energies}, and will be globally denoted as
$\{h_i\}_{i=1}^9$.

\begin{table}[htbp]
   \[
      \begin{array}{c|r@{}c@{}l|c|c}
         i & \multicolumn{3}{c|}{h_i} & j & k  \\
         \hline
         1 & -1&.&58606 & 1 & 5 \\
         2 & -1&.&5855 & 1 & 5  \\
         3 & -1&.&585 & 1 & 5
      \end{array}
      \qquad
      \begin{array}{c|r@{}c@{}l|c|c}
         i & \multicolumn{3}{c|}{h_i} & j & k \\
         \hline
         4 & -1&.&5845 & 1 & 3 \\
         5 & -1&.&5844 & 1 & 3 \\
         6 & -1&.&5843 & 1 & 3
      \end{array}
      \qquad
      \begin{array}{c|r@{}c@{}l|c|c}
         i & \multicolumn{3}{c|}{h_i} & j & k \\
         \hline
         7 & -1&.&5755 & 1 & 1 \\
         8 & -1&.&575 & 1 & 1 \\
         9 & -1&.&574 & 1 & 1 
      \end{array}
   \]
\caption{\label{tbl:energies}Energy levels $\{h_i\}_{i=1}^9$ for which the set
of heteroclinic connections from $\W^c(L_2)$ to
$\W^c(L_1)$ is to be found.}
\end{table}

\begin{figure}[htbp]
   \centering
   \includegraphics{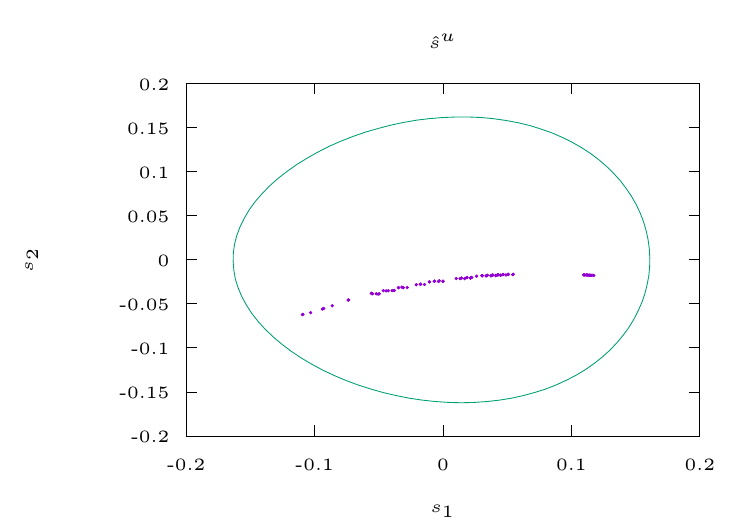}
   \includegraphics{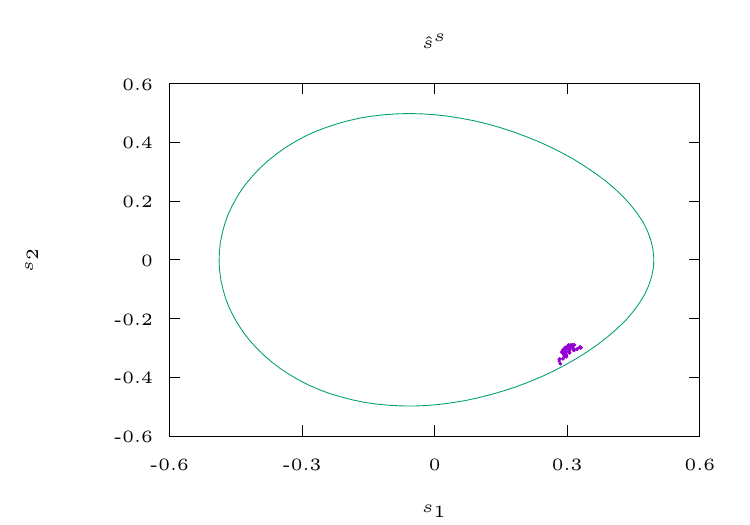}
   \includegraphics{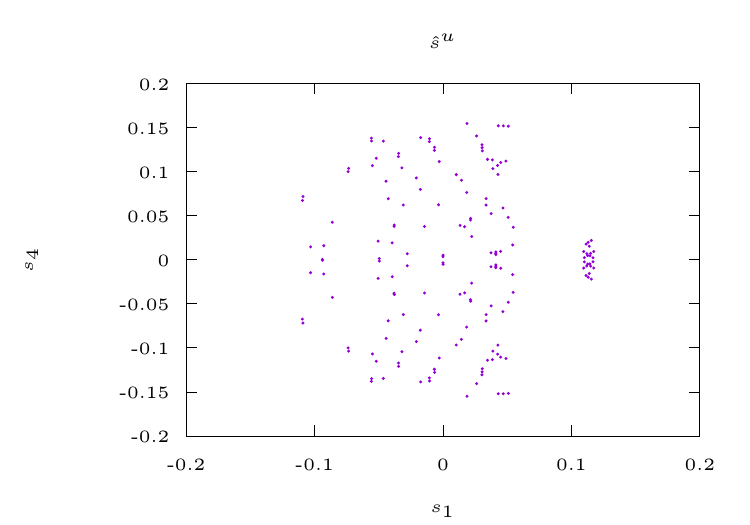}
   \includegraphics{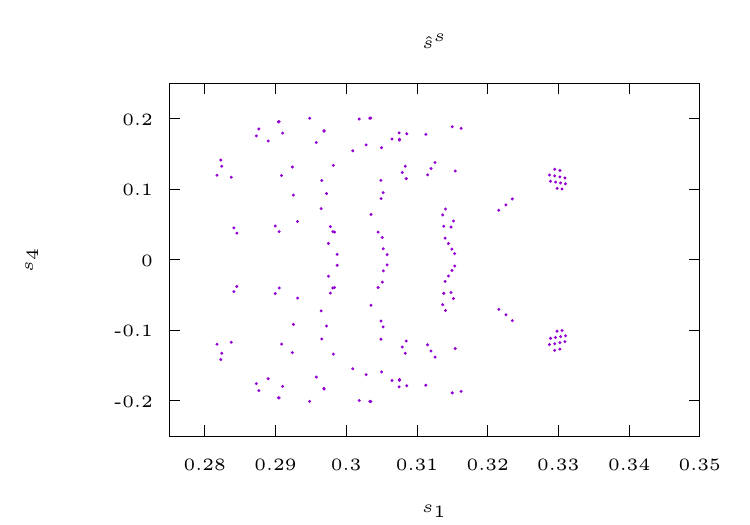}
   \caption{\label{fig:conexionsinc} 
      ``Low-res'' heteroclinic connection set from $\W^c(L_2)$ to $\W^c(L_1)$
      for the energy level $h=h_3$ (purple) 
      in terms of 
      $\hat s^u \in S_2^h$ (left) and $\hat s^s \in S_1^h$ (right) from
      \eqref{eq:unkn}. Green: planar Lyapunov p.o.,
   included as a reference.
}
\end{figure}

In order to give an idea of the steps to follow and the computational effort
required, we provide the details of the computation of the set of heteroclinic
connections for $h=h_3$, with $k=1$ and $j=5$. The corresponding iso-energetic
slices $S^h_1$, $S^h_2$ (see
eq.~\eqref{eq:isoenslice}) are meshed according to Section~\ref{sec:mesh}.
The $\hat S^h_1$ mesh spacing in $s_1,s_2,s_4$ is of $0.0079$
units, with a total of $2049031$ points that are obtained in $16106$ seconds.
The $\hat S^h_2$ mesh spacing is $0.0034$, with a total of
$826599$ points that are
obtained in $6742$ seconds. Setting a tolerance $\xi = 10^{-4}$ gives a set
of pairs $(\hat s^s, \hat s^u)$ as initial approximations of heteroclinic
connections that are later refined using system \eqref{eq:systhetero} as
explained in Section~\ref{sec:conexs}. The set of
heteroclinic connections obtained are presented in
Figure~\ref{fig:conexionsinc} in terms of the values of $\hat s^u,\hat s^s$
found solving system \eqref{eq:systhetero}. This set, made of 168
connections, that have been refined in 68 seconds, provides a
``low-res'' representation of the total heteroclinic connection set.

\begin{figure}[h]
   \centering
   \includegraphics{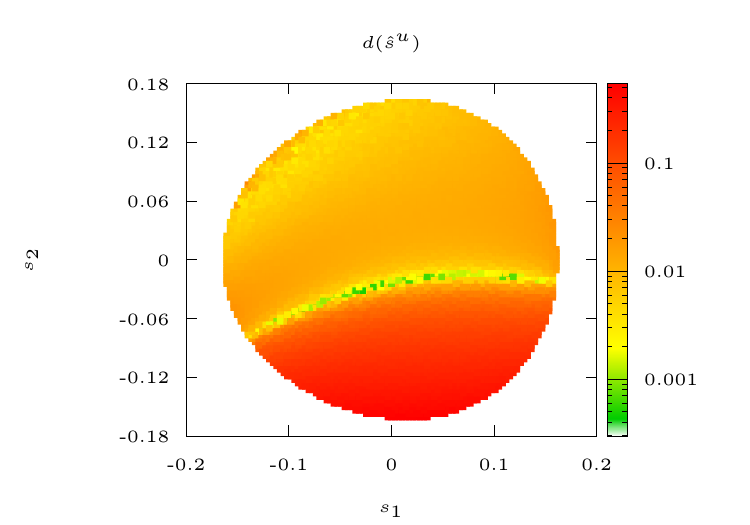}
   \includegraphics{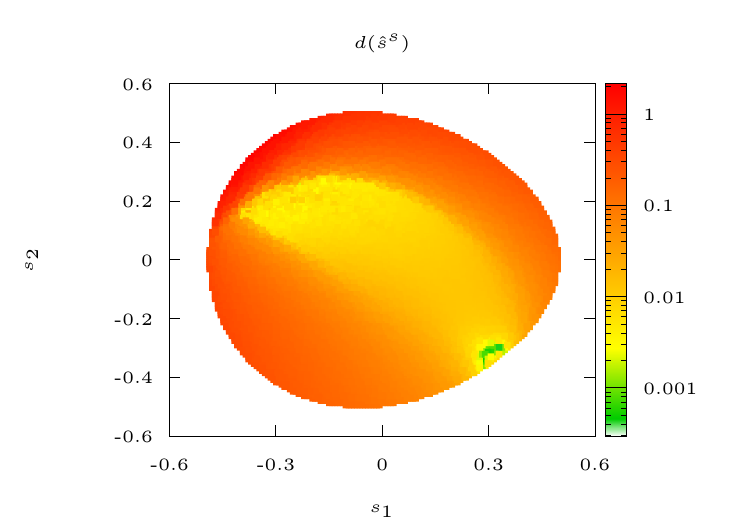}
   \caption{\label{fig:heatmap} 
   Color gradient map provided by
   equation~\eqref{eq:distance} for the
   iso-energetic
   slice of the center manifold at $h=h_3$.
}
\end{figure}

Aiming to refine these results, and in order to describe the whole set
of heteroclinic connections, the values $d(\hat s^s)$
are computed for $\hat s^s$ varying on the mesh of $
S^h_1$ according to the
expression~\eqref{eq:distance}, and the values $d(\hat s^s)$ are computed for
$\hat s^s$ varying on the mesh of $S^h_2$ according to
an analogous
expression. These
values are represented as gradient maps in Figure~\ref{fig:heatmap}. These
figures are useful to determine which regions over each $S_j^h$ need to be
re-meshed with a finer grid. We have used a different re-meshing strategy
depending of the shape of these regions. In the case of
Figure~\ref{fig:heatmap} right, a bounding rectangle in
$s_1,s_2$. In the case
of Figure~\ref{fig:heatmap} left, in which the region seems
to be ``stretched
along a curve'', this curve is approximated by a degree 5 interpolating
polynomial $s_2=p(s_1)$ that is then fattened in $s_2$. In both cases, $s_4$
is allowed to range between the minimum and maximum values attained in the
low-res representation of the set of connections of
Figure~\ref{fig:conexionsinc}.  Once these regions are re-meshed, they are
propagated again up to the Poincaré section, and the triples $(\hat s^s,
\hat s^u( \hat s^s), d(\hat s^s))$ are computed again. From these
triples, and fixing again a tolerance $\xi=10^{-3}$, a new set of
initial approximations to heteroclinic connections is found, which, through the
solution of system \eqref{eq:systhetero}, is refined to a set of
true connections up to tolerance of $10^{-10}$. Thus obtaining 45390
connections in a total of 17628 seconds. The results are
shown in Figure~\ref{fig:resultscomp}. 

\begin{figure}[htbp]
   \centering
   \includegraphics{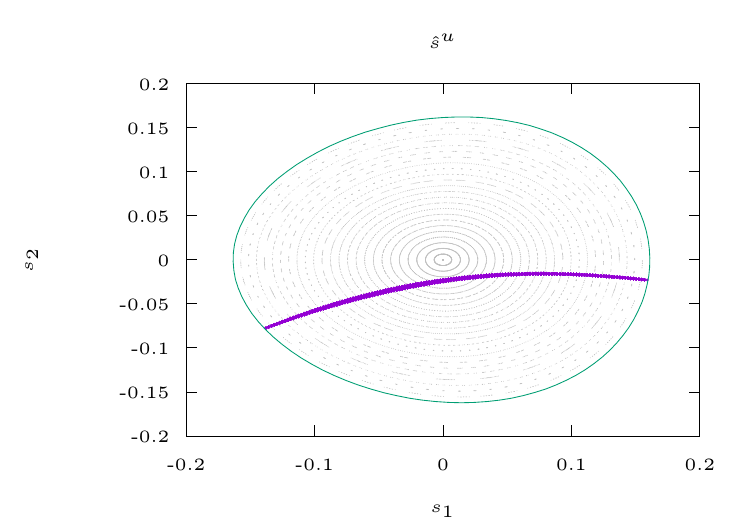}
   \includegraphics{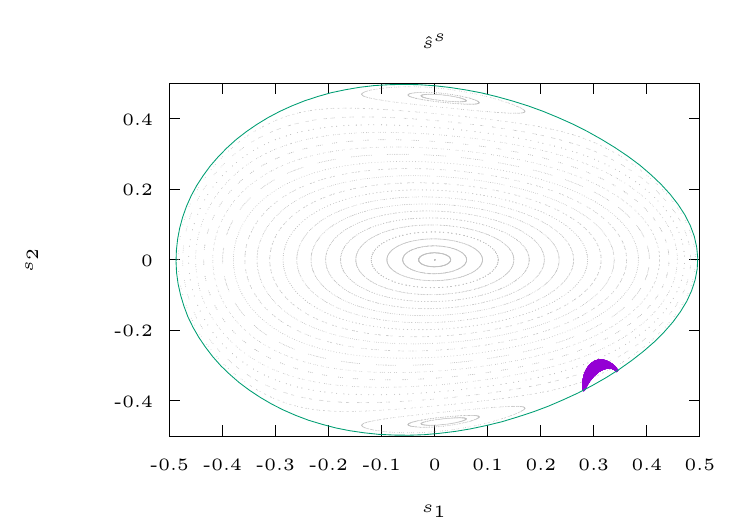}
   \includegraphics{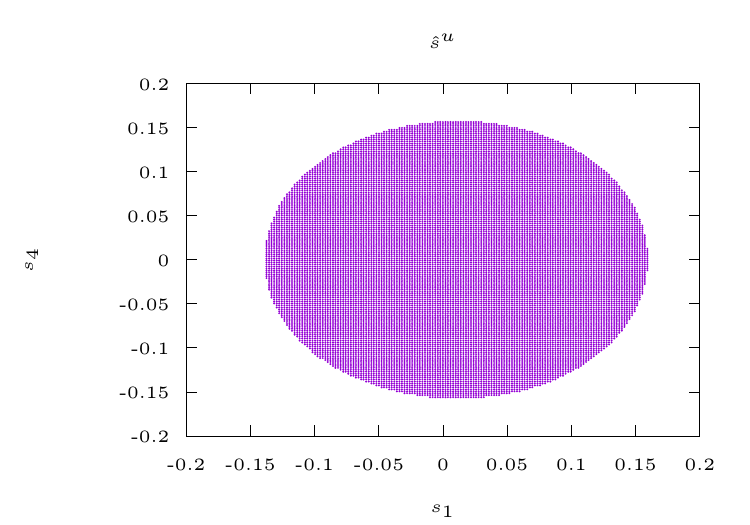}
   \includegraphics{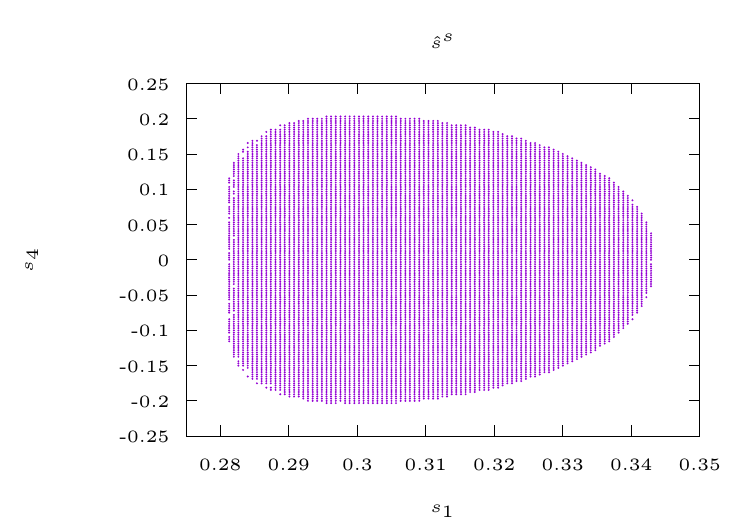}
   \caption{\label{fig:resultscomp}
      Whole set of heteroclinic connections for the energy level $h=h_3$
      from $\W^c(L_2)$ to
      $\W^c(L_1)$ (purple). Green: planar Lyapunov p.o., included as a reference.
      Grey: Poincaré sections of
      several trajectories at $\Sigma:=\{s_4=0\}$.
   }
\end{figure}

Figure~\ref{fig:resultscomp} is a first figure that could be of
special interest for a
space mission analyst. As an addition to
Figure~\ref{fig:conexionsinc}, and
following the classical representations of \cite{esa3,1999JoMa}, the
$\{s_4=0\}$ Poincaré sections of several trajectories of the energy level have
been represented in grey in the first row of
Figure~\ref{fig:resultscomp}. In this
way, Lyapunov and Halo periodic orbits are seen as fixed points ($s_1=s_2=0$
corresponds to the
vertical Lyapunov p.o.~and the two other points correspond to the Halo orbits
of the energy level), and invariant tori are seen as invariant curves (tori
can be found from the Lissajous and quasi-halo families).
From the right column
of Figure~\ref{fig:resultscomp}, it can be seen that, for this energy level, a
small range of the tori close to the planar Lyapunov p.o.~around $L_1$ are
connected to most of the tori around $L_2$. So, assuming this energy level is
convenient, the small range of tori close to the planar p.o.~would be the
place for a space servicing station, that would have ``free routes'' (through
the heteroclinic connections) to most of the tori around $L_2$ (always in the
$h_3$ energy level).  By converting all the points of these figures to synodic
coordinates and using the same scale in both axes, even an idea of the
physical shape of the connected tori could be inferred.

Still considering applications to mission analysis, the last row of
Figure~\ref{fig:resultscomp} would be useful as well. In the last step of its
computation, namely the solution of system \eqref{eq:systhetero}, the
coordinates $s_1,s_4$ in either $\W^{cs}(L_1)$ or $\W^{cu}(L_2)$  can be kept
constant during Newton iterations, and, in this way, obtain an equally spaced
set of points inside the region of heteroclinic connections.
This has actually been done in the second row of
Figure~\ref{fig:resultscomp}.  Over these diagrams, properties of interest of
the connection (such as time of flight, $z$-amplitude, minimum distance to the
Moon) could be graphed.

From Figure~\ref{fig:resultscomp}, it could seem that the set of heteroclinic
connections is a 2-dimensional surface with boundary. This is not the case: in
the re-meshing of the connection candidates, for most of them two solutions
are found for the $s_3$ coordinate. Many survive as heteroclinic connections.
If we use the $s_3$ coordinate in the plot of the set of heteroclinic
connections, a topological sphere is obtained (see
Figure~\ref{fig:h3sphrs}).  This is
coherent with the results in \cite{2007aArMa}. This is also coherent with the
fact that the surfaces of connections displayed in
Figure~\ref{fig:resultscomp} have its apparent boundary on the boundary of the
solid perturbed ellipsoid of the $s_1s_2s_4$
projection of $S_1^h$, $S_2^h$. For $S_2^h$, this fact
can be appreciated in Figure~\ref{fig:h3Elips-l2}, where the mesh obtained for
this perturbed
ellipsoid is represented both in center manifold and synodic coordinates.
The
representation in synodic coordinates of Figure~\ref{fig:h3Elips-l2} uses the
same scale in all axes, and, in this way, provides the actual physical
appearance in configuration space of an iso-energetic slice of the center
manifold.

\begin{figure}[htbp]
   \centering
   \includegraphics{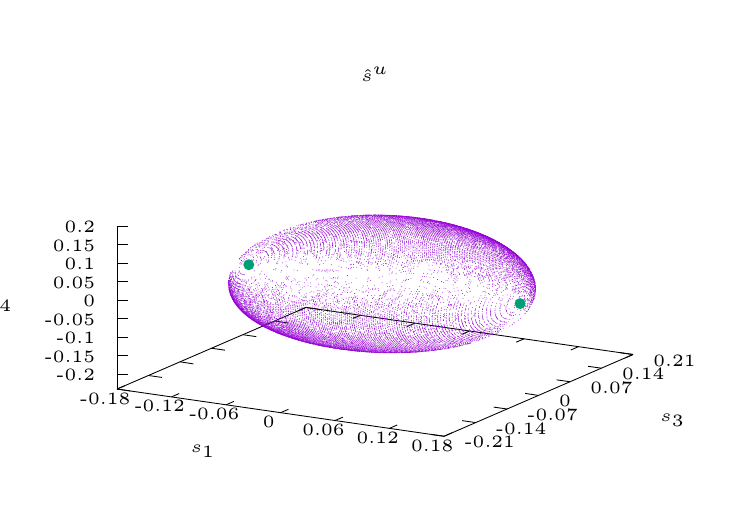}
   \includegraphics{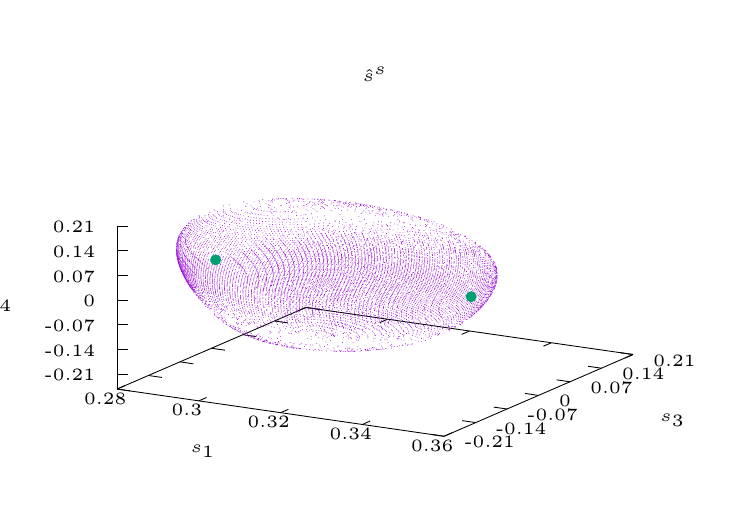}
   \caption{\label{fig:h3sphrs} 
      Whole set of heteroclinic connections from
      $\W^{c}(L_2)$ to
      $\W^{c}(L_1)$ for the energy level $h=h_3$ (purple).
      Green: heteroclinic
      connections between the planar Lyapunov p.o.~at the same energy.
   }
\end{figure}

\begin{figure}[htbp]
   \centering
   \includegraphics{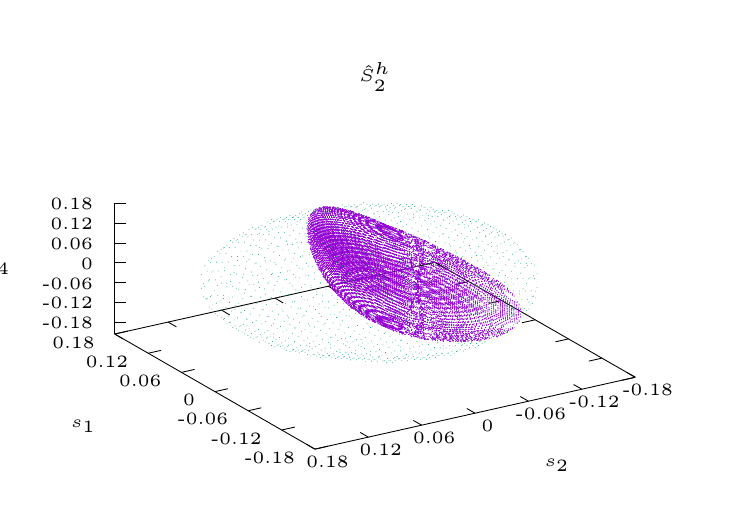}
   \includegraphics{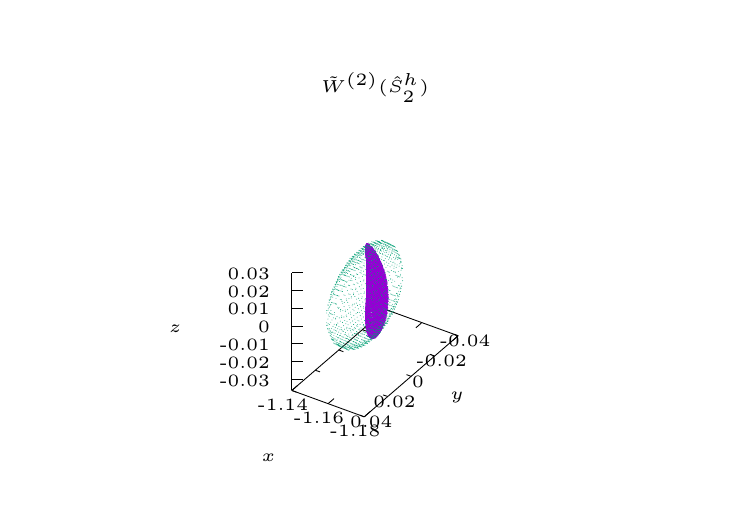}
   \caption{\label{fig:h3Elips-l2} 
      Representation of the boundary of the obtained mesh for the perturbed
      ellipsoid $S_2^h$ at
      $h=h_3$ (green) together with the heteroclinic connections $\hat s^u \in
      S_2^h$ from $\W^c(L_2)$ to $\W^c(L_1)$ (purple). Left: representation in
      the center manifold. Right: representation in synodic coordinates.
}
\end{figure}

In Figure~\ref{fig:esf} we represent the whole set of connections from
$\W^c(L_2)$ to
$\W^c(L_1)$ for the energies $h_3,h_6,h_9$, by
choosing in each case a 3D projection suitable to better see them as
topological spheres. The two
heteroclinic connections between the planar Lyapunov p.o.~of the energy level
are shown as points in the same plot. As an illustration of how individual
connections look like, for the
energies $h_2$, $h_5$, $h_7$ we represent the $xy$ and $yz$ projections of the
connections passing by the moon at minimum distance
(Figure~\ref{fig:dmin}), and also the connections
having maximum $z$-amplitude (Figure~\ref{fig:zmax}). In order to show
the
true physical appearance of these connections, the same scale has been used in
both axes in all the plots. For the same reason, the Moon has been added to the
$xy$ projections.

\begin{figure}[tbp]
   \centering
   \includegraphics{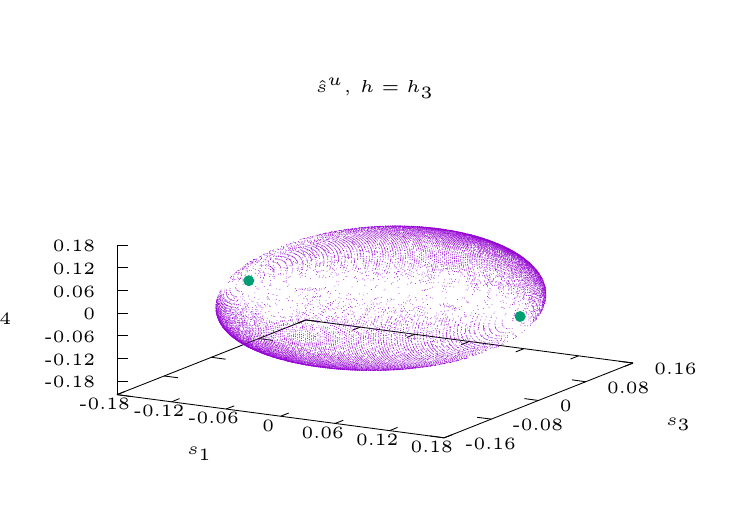}
   \includegraphics{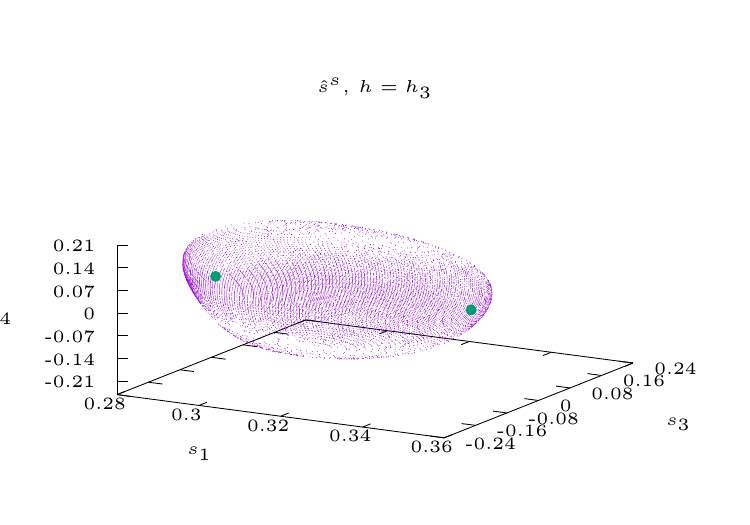}
   \includegraphics{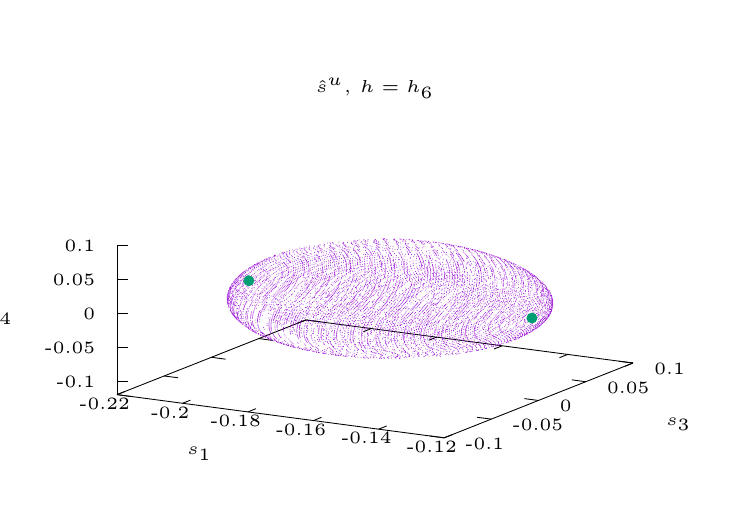}
   \includegraphics{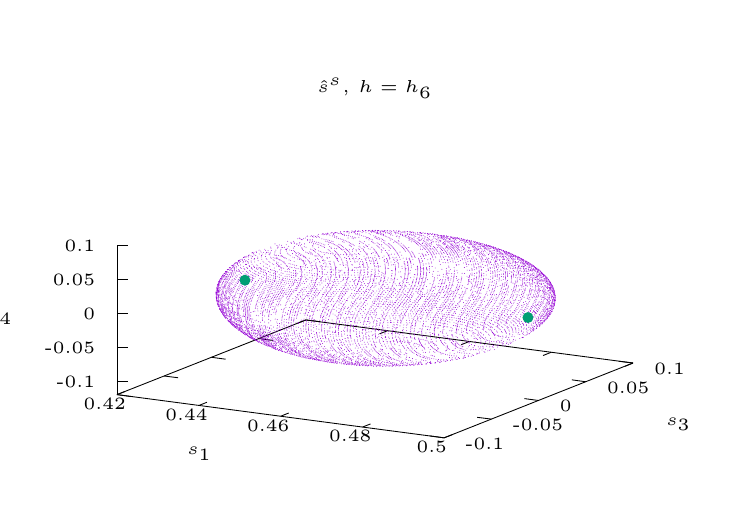}
   \includegraphics{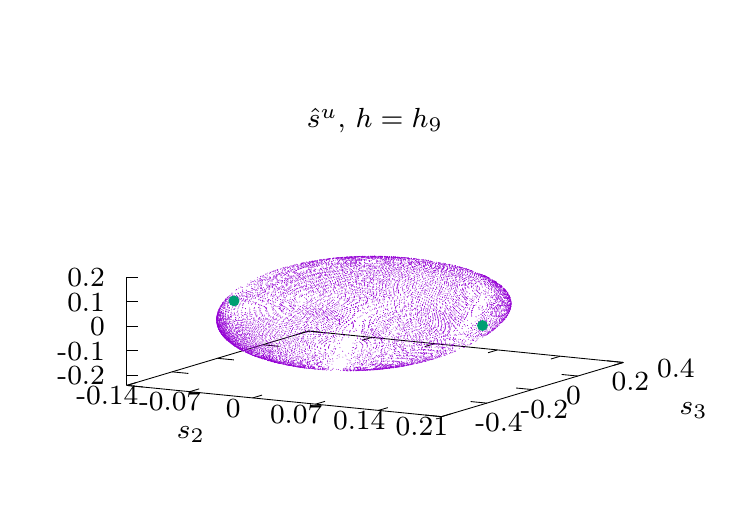}
   \includegraphics{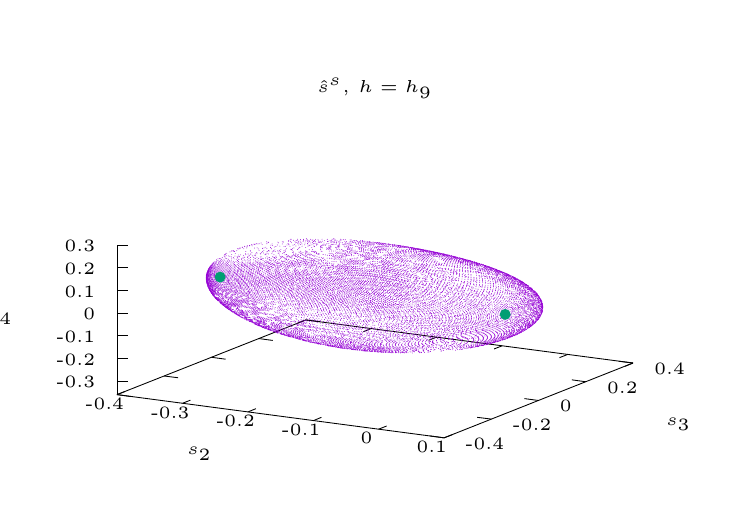}
   \caption{\label{fig:esf}
   Representation of the whole set of heteroclinic connections from
   $\W^{c}(L_2)$ to $\W^{c}(L_1)$ (purple) for several energy levels $h_i$,
   $i=3,6,9$, and in terms of $\hat s^u \in S_2^h$ (left) and $\hat s^s \in S_1^h$
   (right) from \eqref{eq:unkn}. Green: heteroclinic connections between planar
Lyapunov p.o..    }
\end{figure}

\begin{figure}[tbp]
   \centering
   \includegraphics{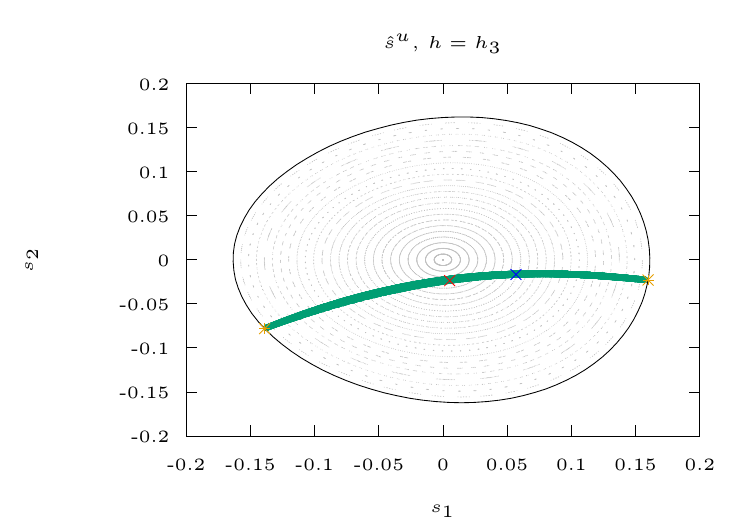}
   \includegraphics{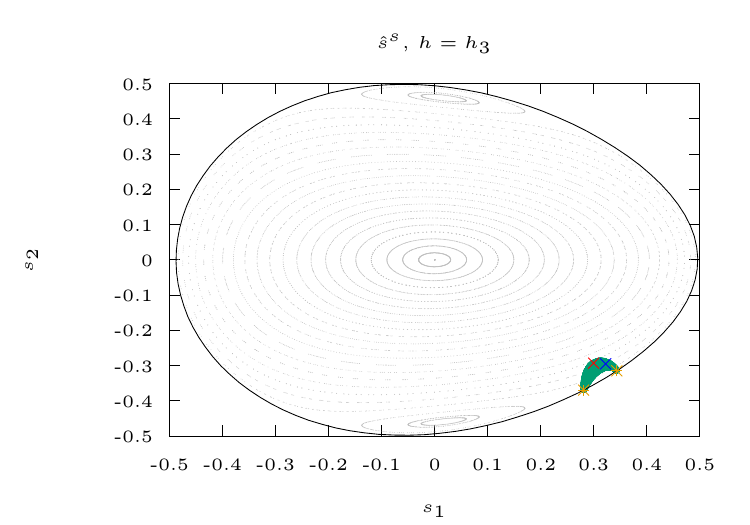}
   \includegraphics{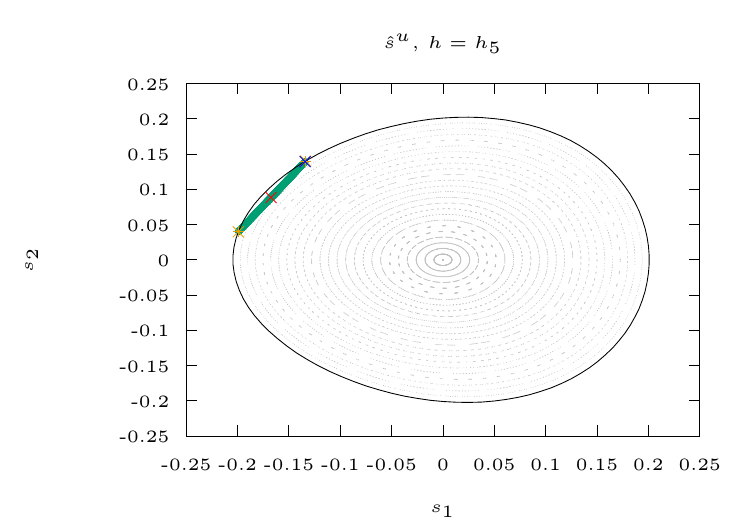}
   \includegraphics{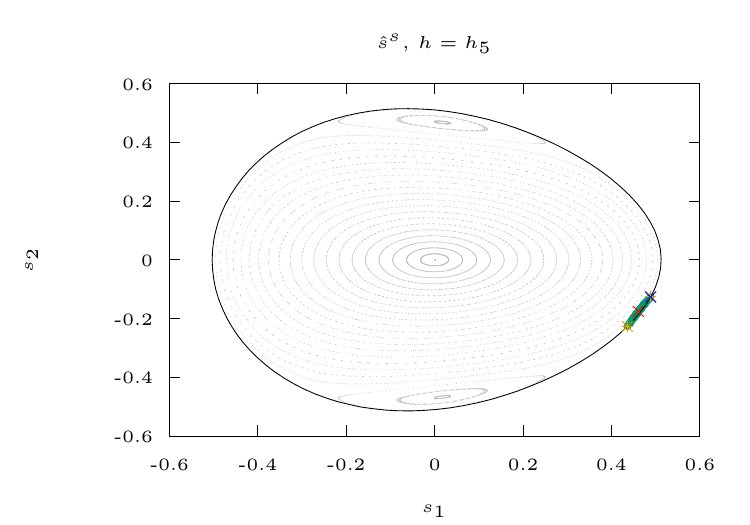}
   \includegraphics{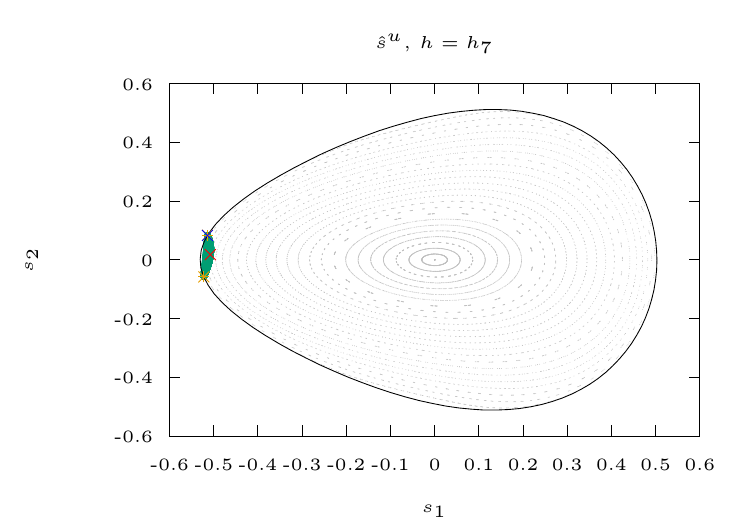}
   \includegraphics{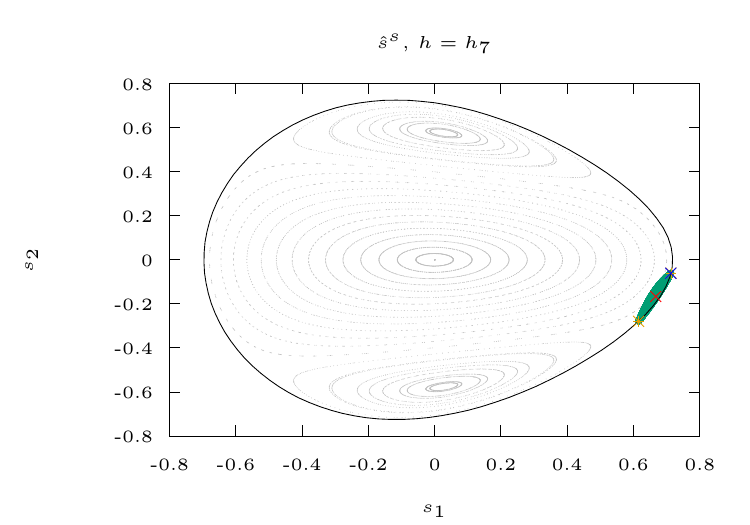}
   \caption{\label{fig:secp} 
      Representation of the whole set of heteroclinic connections from
      $\W^{c}(L_2)$ to $\W^{c}(L_1)$ (green) for the energy levels $h_i$,
      $i=3,5,7$, and in terms of $\hat s^u \in S_2^h$ (left) and $\hat s^s \in
      S_1^h$ (right) from \eqref{eq:unkn}. Yellow: heteroclinic connections
      between planar Lyapunov p.o..  Black: planar Lyapunov
      p.o., included as
      reference.  Grey: Poincaré sections of several trajectories at
      $\Sigma:=\{s_4=0\}$. Blue: initial condition of the heteroclinic
      connection with minimum distance presented in Figure~\ref{fig:dmin}. Red:
      initial condition of the heteroclinic connection with maximum
      $z$-amplitude
      presented in Figure~\ref{fig:zmax}.
}
\end{figure}

\begin{figure}[tbp]
   \centering
   \includegraphics{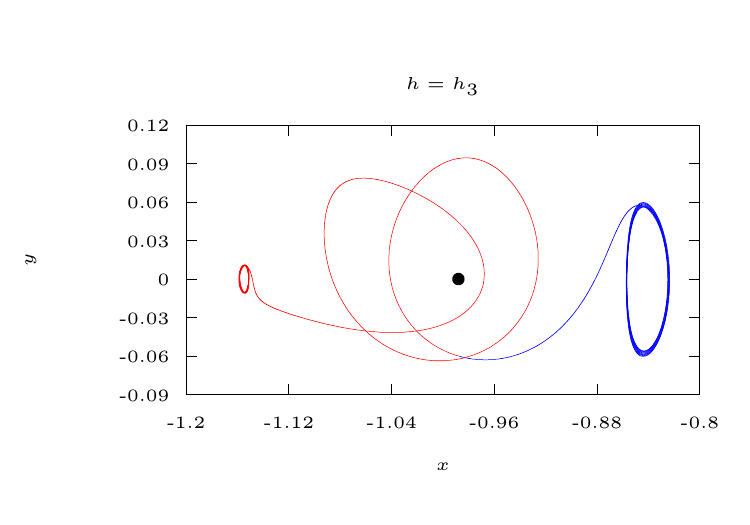}
   \includegraphics{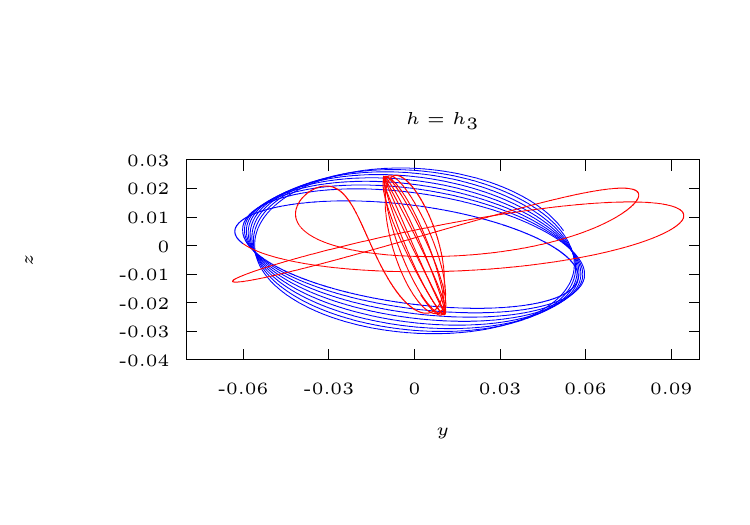}
   \includegraphics{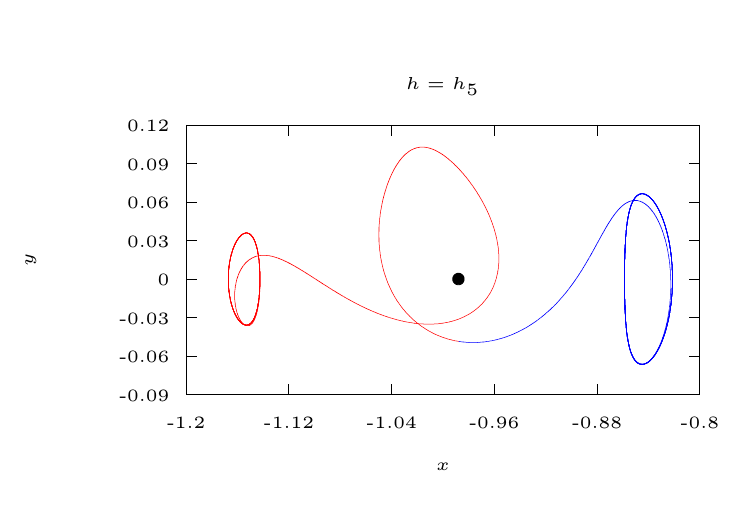}
   \includegraphics{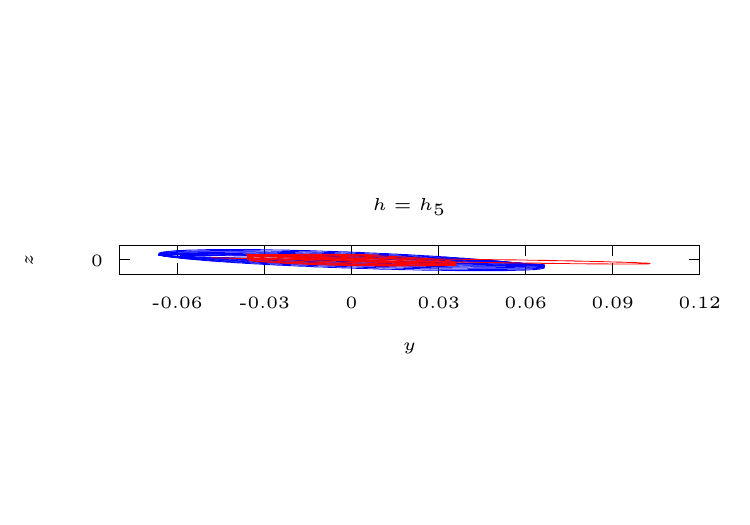}
   \includegraphics{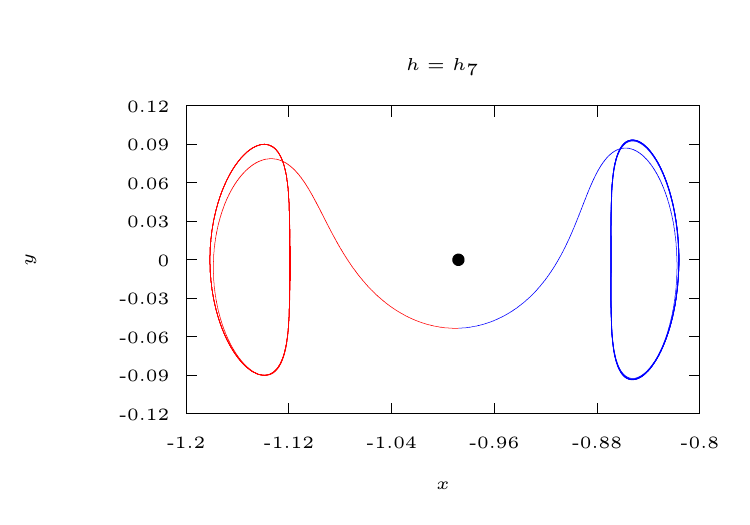}
   \includegraphics{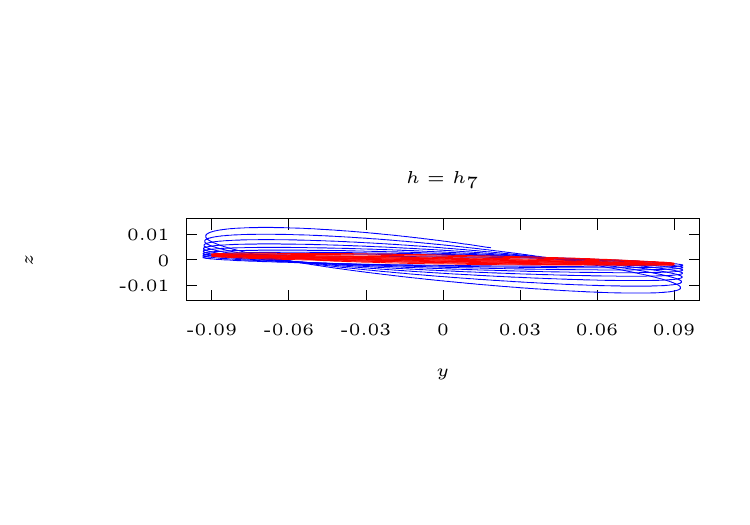}
   \caption{\label{fig:dmin}
   Heteroclinic connection from $\W^{c}(L_2)$ to $\W^{c}(L_1)$ with minimum
   distance to the second primary along the trajectory at different energy levels
   $h_i$, $i=3,5,7$. 
   Red: unstable branch of $L_2$.  Blue: stable branch of $L_1$. 
}
\end{figure}

\begin{figure}[tbp]
   \centering
   \includegraphics{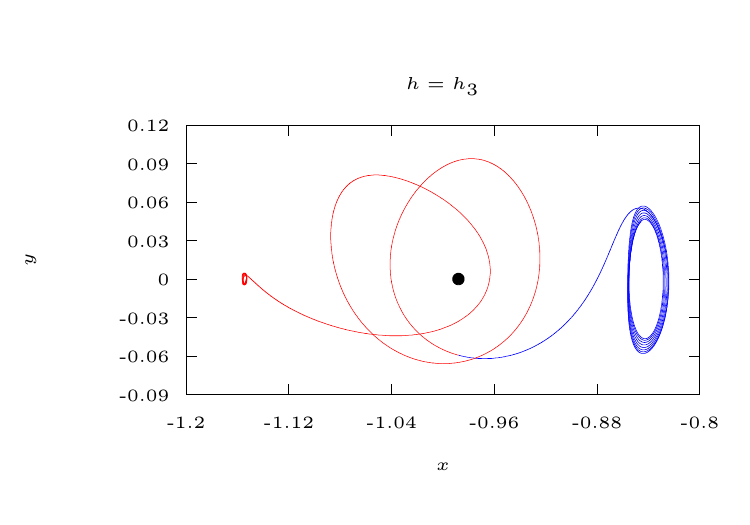}
   \includegraphics{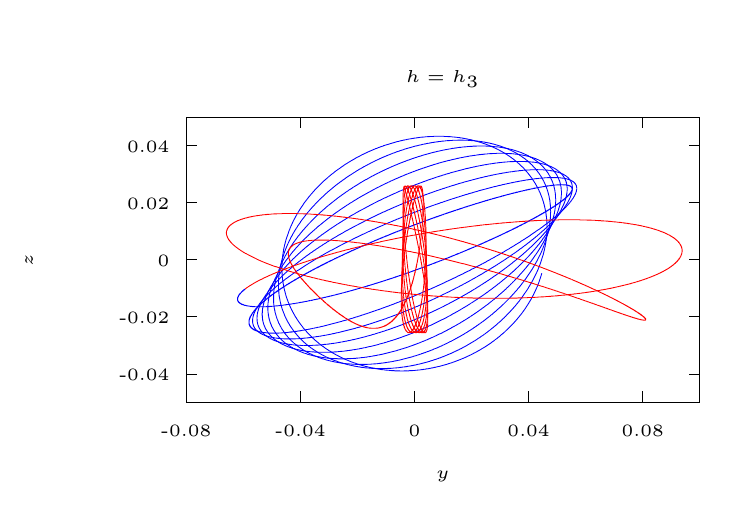}
   \includegraphics{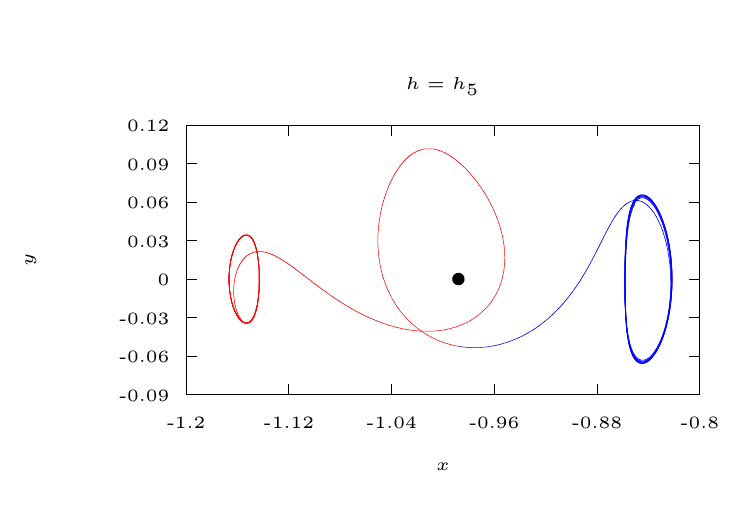}
   \includegraphics{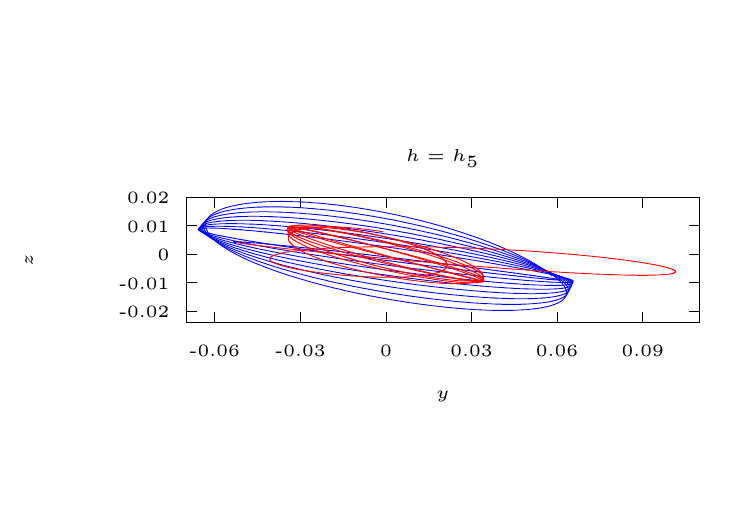}
   \includegraphics{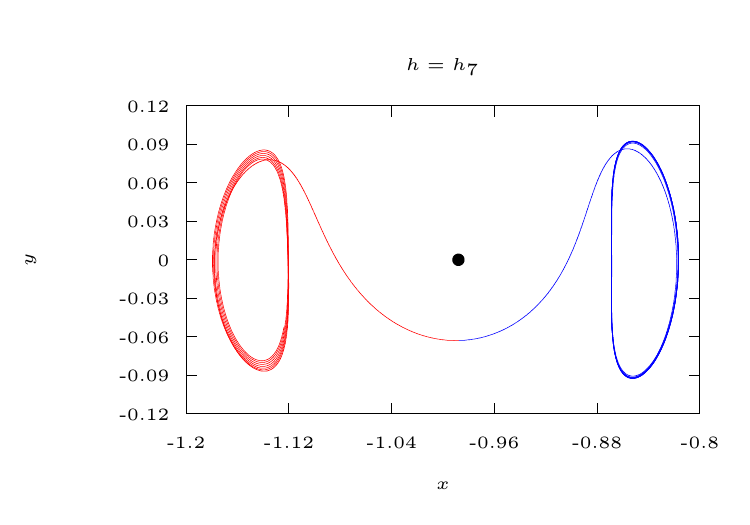}
   \includegraphics{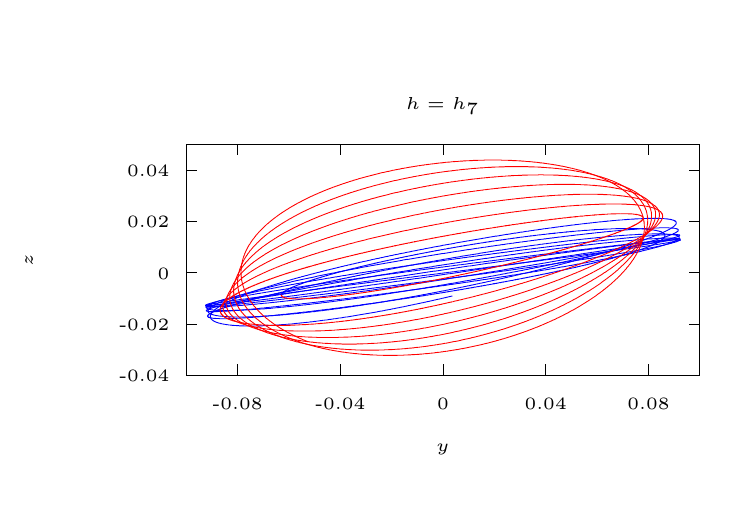}
   \caption{\label{fig:zmax} 
      Heteroclinic connection from $\W^{c}(L_2)$ to $\W^{c}(L_1)$ with maximum
   $z$-amplitude along the trajectory at different energy levels $h_i$,
   $i=3,5,7$.
   Red: unstable branch of $L_2$.
   Blue: stable branch of $L_1$. 
}
\end{figure}

\section{Conclusions}

In this paper we have shown how to compute whole sets of heteroclinic
connections between iso-energetic slices of center manifolds of fixed points of 
center$\times$center$\times$saddle type of autonomous, 3-degrees of-freedom
Hamiltonians. To do so, we have used the parameterization method by
adding a new extra style to uncouple the hyperbolic part from the central and
thus making explicit
the fibered structure of center-stable and center-unstable manifolds. A
meshing strategy for iso-energetic slices of center manifolds that avoids
numerical integration of the reduced equations is crucial for the whole
procedure to be computationally efficient.

This methodology is applied to the Earth-Moon spatial, circular RTBP to obtain
whole sets of heteroclinic connections from the center manifold of $L_2$ to
the center manifold of $L_1$ for different energy levels. As explained, these
sets contain a full description of the connections between different objects
from the departure center manifold to the arrival center manifold and that is
why we expect these sets to be useful in preliminary mission design.
Some comments in this direction have been made.

The applicability of the procedure presented in this paper is limited by the
validity of the expansions of center-stable and center-unstable manifolds. In
this paper, for the Earth-Moon case, they have been checked to be accurate up
to energies in which the $L_1$ Halo family of periodic orbits has already
appeared and has a certain non-small amplitude
(Figures~\ref{fig:errbyorder},\ref{fig:secp}).  A starting point in order to
go to higher energies would be to numerically globalize the center-stable and
center-unstable manifolds of the libration points. This can be done from a
sufficiently dense grid of
numerically computed invariant tori together with their stable and unstable
manifolds \cite{2009MoBaGoOll,2012MoBaGoOll}. There are several methodologies
available for the numerical computation of invariant tori and their
manifolds \cite{2001GoMo,2018BaOlSche,2021aHaMo,2022KuAndLla}, of which
the ones based on the parameterization method and flow maps (or stroboscopic
maps) seem to be the most efficient computationally.

\section*{Acknowledgements}

This work has been supported by the Spanish grants MCINN-AEI
PID2020-118281GB-C31 and PID2021-125535NB-I00, the Catalan grant 2017 SGR
1374, by the Spanish State Research Agency, through the Severo Ochoa and
Mar{\'\i}a de Maeztu Program for Centers and Units of Excellence in R\&D
(CEX2020-001084-M), the European Union's Horizon 2020 research and innovation
program under the Marie Sklodowska-Curie grant agreement No.~734557, the
Secretariat for Universities and Research of the Ministry of Business and
Knowledge of the Government of Catalonia, and by the European Social Fund.

\bibliographystyle{abbrv}
\bibliography{references.bib}

\end{document}